\numberwithin{equation}{section}
\newtheorem{theorem}{Theorem}[section]
\newtheorem{lemma}[theorem]{Lemma}
\newtheorem{proposition}[theorem]{Proposition}
\newtheorem{corollary}[theorem]{Corollary}
\theoremstyle{definition}
\newtheorem{definition}[theorem]{Definition} 
\newtheorem{remark}[theorem]{Remark}
\newtheorem{example}[theorem]{Example}
\begin{document}

%%%%%%%%%%%%%%%%%%%%%%%%%%%%%%%%%%%%%%%%%%%%%%%%%%%%%%%%%%%%%%%%%%%%%

\title[]{Computing the degree of a lattice ideal of dimension one}

\thanks{The first author was partially supported by CONACyT. 
The second author was partially supported by SNI}

\author{Hiram H. L\'opez}
\address{
Departamento de
Matem\'aticas\\
Centro de Investigaci\'on y de Estudios
Avanzados del
IPN\\
Apartado Postal
14--740 \\
07000 Mexico City, D.F.
}

\author{Rafael H. Villarreal}
\address{
Departamento de
Matem\'aticas\\
Centro de Investigaci\'on y de Estudios
Avanzados del
IPN\\
Apartado Postal
14--740 \\
07000 Mexico City, D.F.
}
\email{vila@math.cinvestav.mx}

\keywords{Lattice ideals, degree, index of regularity, 
Smith normal form, vanishing ideals, Hilbert functions, torsion
subgroup}
\subjclass[2010]{Primary 13F20; 13P25, 13H15, 11T71.} 

\begin{abstract} We show that the degree of a graded lattice ideal of
dimension $1$ is the order of the torsion subgroup of the
quotient group of the lattice. This gives an efficient method to
compute the degree of this type of lattice ideals. 
\end{abstract}

\maketitle 

\section{Introduction}\label{intro-degree-lattice}

Let $S=K[t_1,\ldots,t_s]$ be a graded polynomial ring over a
field $K$, where each $t_i$ is homogeneous of degree one, 
and let $S_d$ denote the set of homogeneous polynomials of
total degree $d$ in $S$, together with the zero polynomial. The set
$S_d$ is a $K$-vector space of dimension $\binom{d+s-1}{s-1}$. If
$I\subset S$ is a graded ideal, i.e., $I$ is generated by homogeneous
polynomials, we let 
$$
I_d=I\cap S_d,
$$
denote the set of homogeneous polynomials in $I$ of total degree $d$,
together with the zero polynomial. 
Note that $I_d$ is a vector
subspace of $S_d$. Then the {\it Hilbert function\/} of the quotient
ring $S/I$, denoted by $H_I(d)$, is defined by
$$ 
H_I(d)=\dim_K(S_d/I_d). 
$$
According to a classical result of Hilbert
\cite[Theorem~4.1.3]{BHer}, there is a unique polynomial 
\begin{eqnarray*}  
h_I(t)=c_kt^k+(\mbox{terms of lower degree})
\end{eqnarray*}
of degree $k$, with rational coefficients, such that $h_I(d)=H_I(d)$
for $d\gg 0$. By 
convention the zero polynomial has degree $-1$, that is, $h_I(t)=0$ if
and only if $k=-1$. The integer $k+1$ is 
the {\it Krull dimension\/} of $S/I$ and $h_I(t)$ is the {\it Hilbert
polynomial\/} 
of $S/I$. If $k\geq 0$, the positive integer $c_{k}(k!)$ is called 
the {\it degree\/} of $S/I$. The {\it degree\/} of $S/I$ is 
defined as $\dim_K(S/I)$ if $k=-1$. The {\it index of regularity\/}
of $S/I$, denoted by  
${\rm reg}(S/I)$, is the least integer $r\geq 0$ such that
$h_I(d)=H_I(d)$ for $d\geq r$. The degree and the Krull dimension are
denoted  
by ${\rm deg}(S/I)$ and $\dim(S/I)$, respectively. As usual, by
the {\it dimension\/} of $I$ we mean the Krull dimension of $S/I$.

The notion of {\it degree\/} plays an important role in algebraic
geometry  \cite{CLO,cox-toric,harris} and commutative algebra
\cite{BHer,Eisen}. Consider a projective space $\mathbb{P}^{s-1}$
over the field $K$.  
The {\it degree\/} and the {\it
dimension\/} of a projective variety $X\subset\mathbb{P}^{s-1}$ can
be read off 
from  the Hilbert polynomial $h_I(t)$, where
$I=I(X)$ is the {\it vanishing ideal\/} of $X$ generated by the
homogeneous 
polynomials  of $S$ that vanish at all points of $X$. For the
geometric interpretation of the degree 
of $S/I(X)$ see the references above. If $X$ is a finite set
of points, the Hilbert polynomial of $S/I(X)$ is a non-zero constant,  
the degree of $S/I(X)$ is equal to $|X|$ (the number of points in
$X$), and the dimension of $S/I(X)$ is equal to $1$
\cite[p.~164]{harris}. In view of its applications to coding theory, 
we are interested in the case when $K$ is a
finite field and $X$ is parameterized by monomials 
(see Section~\ref{vanishing-id-section}). 

Let $\mathcal{L}\subset \mathbb{Z}^s$ be a {\it lattice\/}, i.e., 
$\mathcal{L}$ is a subgroup of $\mathbb{Z}^s$. The {\it lattice
ideal\/} of $\mathcal{L}$, denoted by $I(\mathcal{L})$,  
is the ideal of $S$ generated by the set of all 
binomials $t^{a^+}-t^{a^-}$ such that $a\in\mathcal{L}$, where $a^+$
and $a^-$ 
are the positive and negative part of $a$
(see Section~\ref{degree-lattice-section}).  
A first hint of the rich interaction between the group theory of
$\mathcal{L}$ and the algebra of $I(\mathcal{L})$ is that the 
rank of $\mathcal{L}$, as a free abelian group, is equal to
$s-\dim(S/I(\mathcal{L}))$ \cite[Proposition~7.5]{cca}. This number 
is the height of the ideal $I(\mathcal{L})$ in the sense of
\cite{BHer}, and is usually denoted by
${\rm ht}(I(\mathcal{L}))$. Another useful result is that
$\mathbb{Z}^s/\mathcal{L}$ is a torsion-free group if and only if
$I(\mathcal{L})$ is a 
prime ideal \cite[Theorem~ 7.4]{cca}. In the same vein, for a certain
family of lattice ideals, we will relate the structure of the 
finitely generated abelian group $\mathbb{Z}^s/\mathcal{L}$ and the
degree of $S/I(\mathcal{L})$. 

The set of nonnegative integers (resp. positive integers) 
is denoted by $\mathbb{N}$ (resp. $\mathbb{N}_+$).
The structure of $\mathbb{Z}^s/\mathcal{L}$ can easily be 
determined, as we now explain. Let $A$ be an integral matrix of order
$m\times s$
whose rows generate $\mathcal{L}$. There are unimodular integral
matrices $U$ and $V$ such that $UAV=D$, where  
$D={\rm diag}(d_1,d_2,\ldots
, d_r,0,\ldots ,0)$ is a diagonal matrix, with $d_i\in\mathbb{N}_+$
and $d_i$ divides $d_j$ if 
$i\leq j$. The matrix $D$ is the {\it Smith normal form\/} of
$A$ and the integers
$d_1,\ldots,d_r$ are the {\it invariant factors} of $A$. Recall that
the {\it torsion subgroup\/} of $\mathbb{Z}^s/\mathcal{L}$, denoted by 
$T(\mathbb{Z}^s/\mathcal{L})$, consists of all
$\overline{a}\in\mathbb{Z}^s/\mathcal{L}$ such that
${\ell}\,\overline{a}=\overline{0}$ for some $\ell\in\mathbb{N}_+$. 
From the fundamental structure theorem for finitely 
generated abelian groups \cite{JacI} one has: 
\begin{eqnarray*}
& &\mathbb{Z}^s/\mathcal{L}\simeq \mathbb{Z}/(d_1)\oplus
\mathbb{Z}/(d_2)\oplus\cdots\oplus\mathbb{Z}/(d_r)\oplus\mathbb{Z}^{s-r},\\
&&\ \ \ \ \ \ \ \  \ \ \ \ \ \ \ \  T(\mathbb{Z}^s/\mathcal{L})\simeq
\mathbb{Z}/(d_1)\oplus 
\mathbb{Z}/(d_2)\oplus\cdots\oplus\mathbb{Z}/(d_r),
\end{eqnarray*}
where $r$ is the rank of $\mathcal{L}$. In particular the order of 
$T(\mathbb{Z}^s/\mathcal{L})$ is $d_1\cdots d_r$. Thus, using any algebraic
system that compute Smith normal forms of integral matrices, {\it
Maple\/} \cite{maple} for instance,  
one can determine the order of $T(\mathbb{Z}^s/\mathcal{L})$.

Let $I(\mathcal{L})\subset S$ be a graded lattice ideal 
of dimension one. Note that the
corresponding lattice $\mathcal{L}$ is homogeneous and has rank $s-1$ (see
Definition~\ref{homogeneous-lattice-def} and
Remark~\ref{remark-lh-rh}).
 The aim of this paper is to give a new method, 
using integer linear algebra, to compute the degree of
$S/I(\mathcal{L})$.

The contents of this paper are as follows. 
In Section~\ref{hilbert-function-section}, we present some well known results
about the behavior of Hilbert functions of graded ideals. In
particular, we recall a standard method, using Hilbert series, 
to compute the degree and the index of regularity.  

In Section~\ref{degree-lattice-section}, we use linear algebra and
Gr\"obner bases methods to describe 
the torsion subgroup of $\mathbb{Z}^s/\mathcal{L}$ (see
Lemmas~\ref{may5-12} and \ref{may6-12}). 
Then, using standard Hilbert functions techniques, we give an upper
bound for the index of 
regularity (see Proposition~\ref{may6-12-1}). 

The main result of this
paper is the following formula for the degree:
$$\deg\, S/I(\mathcal{L})=|T(\mathbb{Z}^s/\mathcal{L})|,$$
where $|T(\mathbb{Z}^s/\mathcal{L})|$ is the cardinality  of
the torsion subgroup (see Theorem~\ref{degree-lattice}). As a
consequence, if $\mathcal{L}$ is generated as a $\mathbb{Z}$-module by
the rows of an integral matrix
$A$, then
$$
{\rm deg}\, S/I(\mathcal{L})=d_1\cdots d_{s-1},
$$
where $d_1,\ldots,d_{s-1}$ are the invariant factors of $A$ (see
Corollary~\ref{may28-12}). This gives a method to compute the degree
directly from a set of
generators of the lattice using the Smith normal form from linear
algebra. It would be interesting to compute the index of 
regularity in terms of the lattice using linear algebra
methods. Some other problems for future works are included in
Section~\ref{future-works-section}.  

If $\mathfrak{B}$ is a $\mathbb{Z}$-basis for $\mathcal{L}$
and $\mathcal{P}$ is the convex hull of $\mathfrak{B}\cup\{0\}$ in
$\mathbb{R}^s$, we obtain the following expression for the
degree:  
$$
{\rm deg}\, S/I(\mathcal{L})=(s-1)!{\rm vol}(\mathcal{P}),
$$
where ``${\rm vol}$'' is the relative volume in the sense 
of \cite{ehrhart,Sta5} (see
Corollary~\ref{jun2-12}). If an
integral polytope in $\mathbb{R}^s$ has dimension $s$, then its relative 
volume agrees with its usual volume \cite[p.~239]{Sta5}. Note that in
our situation the usual volume of $\mathcal{P}$ is $0$ because
$\mathcal{P}$ has dimension $s-1$. This is why we express the degree 
in terms of the relative volume.

There are standard methods to compute the degree of 
any graded lattice ideal, using Gr\"obner
bases and Hilbert series (see Sections~\ref{hilbert-function-section}
and \ref{section-examples}), but our method is far more efficient, 
especially with large examples (see Examples~\ref{jun2-12-1} and
\ref{jun2-12-2}). Our main result cannot be generalized to arbitrary
lattice ideals (see Example~\ref{jun4-12}).

In Section~\ref{vanishing-id-section}, we consider the case when $K$
is a finite field and $X$ is a subset of the projective space 
$\mathbb{P}^{s-1}$ over the field $K$. If $X$ is parameterized by
monomials, by Theorem~\ref{lemma-homogeneous-lattice}, our results
can be applied 
to the vanishing ideal $I(X)$. The degree of a vanishing ideal is
relevant from the 
viewpoint of coding theory because it occurs as one of the
main parameters of evaluation codes \cite{algcodes}.  We show an
instance where the algebraic 
structure of the lattice is reflected in the algebraic
structure of the vanishing ideal (see Corollary~\ref{jun2-12-3}).  

For all unexplained
terminology and additional information,  we refer to 
\cite{EisStu,cca,monalg} (for the theory of binomial and lattice
ideals) and \cite{CLO,Sta1} (for Gr\"obner bases and Hilbert
functions).

\section{Hilbert functions and the degree}\label{hilbert-function-section}

In this section we recall some well known results about the behavior
of Hilbert 
functions of graded ideals and recall a standard
method, using Hilbert series, to compute the degree.  

We continue to use the notation and definitions used in
Section~\ref{intro-degree-lattice}. 
Let 
$$S=K[t_1,\ldots,t_s]=
\bigoplus_{d=0}^\infty S_d
$$ 
be a graded polynomial ring over a
field $K$, with the grading induced by setting $\deg(t_i)=1$
for $i=1,\ldots,s$, and let $I\subset S$ be a graded
ideal. The ring $S/I$ inherits a graded structure 
\[
S/I=\bigoplus_{d=0}^\infty (S/I)_d
\]
whose $d$-th component is given by $(S/I)_d=S_d/I_d$, where 
$I_d=I\cap S_d$. Recall that the Hilbert function of $S/I$ is given by 
$$H_I(d)=\dim_K
S_d/I_d.$$
 
The degree and the regularity of $S/I$ can be computed using Hilbert
series, as
we now explain. By the Hilbert Serre theorem, there is a unique
polynomial $g(t)\in\mathbb{Z}[t]$ with $g(1)\neq 0$ such that 
the {\it Hilbert series} $F_I(t)$ of
$S/I$ can be 
written as
\begin{equation*}
F_I(t):=\sum_{d=0}^{\infty}H_I(d)t^d=\frac{g(t)}{(1-t)^\lambda},
\end{equation*}
where $\lambda$ is the Krull dimension of $S/I$. The degree of $S/I$ is
equal to $g(1)$ and the index of regularity of $S/I$ is equal to $0$ if
$\deg(g(t))-\lambda<0$ and is equal to $\deg(g(t))-\lambda+1$
otherwise. Thus,
the computation of the degree is reduced to the 
computation of the Hilbert series of $S/I$. There are a number of
computer algebra  
systems ({\it Macaulay\/}$2$ \cite{mac2}, {\it CoCoA}, {\it
Singular}) that compute the Hilbert series and the degree of $S/I$ using
Gr\"obner bases. Two excellent references for computing
Hilbert series, using elimination of variables, are \cite{BS1,Bigatti1}.
For toric ideals there are  methods, implemented in {\it Normaliz\/}
\cite{normaliz2}, to compute its Hilbert series and its degree 
using polyhedral geometry.  

We could not find a reference for the next simple lemma. 

\begin{lemma}\label{nov6-11-grace} {\rm(a)} If $S_i=I_i$ for some
$i\geq 1$, then $S_d=I_d$ for all $d\geq i$.

{\rm(b)}  If $\dim\, S/I\geq 1$, then $H_I(i)>0$ 
for $i\geq 0$.  
\end{lemma}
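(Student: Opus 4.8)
The plan is to deduce both statements from elementary facts about graded modules over $S$, avoiding any appeal to the Hilbert polynomial. For part~(a), the key observation is that $S$ is generated in degree one, so $S_{d+1}=S_1\cdot S_d$ for every $d\geq 0$; consequently if $S_i=I_i$, then multiplying by $S_1$ gives $S_{i+1}=S_1 S_i\subseteq S_1 I_i\subseteq I_{i+1}\subseteq S_{i+1}$, hence $S_{i+1}=I_{i+1}$, and one iterates. This is just a one-line induction on $d\geq i$, so there is really no obstacle here; the only thing to be careful about is the hypothesis $i\geq 1$, which is needed so that $S_1 S_i$ genuinely fills up $S_{i+1}$ (the statement is false for $i=0$, where $S_0=I_0$ would force $I=S$).

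For part~(b), I would argue by contrapositive using part~(a). Suppose $H_I(i)=0$ for some $i\geq 0$; I want to conclude $\dim S/I=0$ (i.e. $\dim S/I\leq 0$, which combined with $\dim S/I\geq -1$ and the hypothesis rules out $\dim S/I\geq 1$). If $i=0$ then $I_0=S_0$, so $1\in I$, hence $I=S$ and $\dim S/I=-1<1$. If $i\geq 1$, then $S_i=I_i$, so by part~(a) we get $S_d=I_d$ for all $d\geq i$; therefore $(S/I)_d=0$ for all $d\gg 0$, which means $S/I$ is a finite-dimensional $K$-vector space, and hence $\dim S/I=-1$ if $S/I=0$ or $\dim S/I=0$ otherwise. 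In either case $\dim S/I\leq 0$, contradicting $\dim S/I\geq 1$. Thus $H_I(i)>0$ for all $i\geq 0$.

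The mildly delicate point is the bookkeeping at the boundary: one should note that $H_I(0)=\dim_K S_0/I_0$ is either $0$ (when $I=S$) or $1$, so under the hypothesis $\dim S/I\geq 1$ the case $i=0$ is immediate, and the substance of the argument is the range $i\geq 1$. An alternative to the contrapositive is a direct monotonicity-type argument — showing $H_I(i)>0$ forces $H_I(i+1)>0$ when $\dim S/I\geq 1$ — but this essentially re-proves part~(a) in disguise, so invoking (a) is cleaner. I expect no real obstacle in either part; the whole lemma is a short exercise once one writes $S_{d+1}=S_1 S_d$.
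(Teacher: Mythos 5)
Your proof is correct and follows essentially the same route as the paper: part (a) via $S_{i+1}=S_1S_i$, and part (b) by contradiction from (a), since eventual vanishing of the Hilbert function is incompatible with $\dim S/I\geq 1$ (the paper phrases this as the Hilbert polynomial having degree $\dim(S/I)-1\geq 0$ and hence being nonzero, while you phrase it as $S/I$ being finite-dimensional over $K$ — the same fact in different clothing). One aside in your write-up is off: the conclusion of (a) is not actually false for $i=0$, since $S_0=I_0$ forces $I=S$ and then $S_d=I_d$ for all $d$ trivially, so the hypothesis $i\geq 1$ is harmless but not needed for the argument.
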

\begin{proof} (a) It suffices to prove the 
case $d=i+1$. As $I_{i+1}\subset S_{i+1}$, we need only show
$S_{i+1}\subset I_{i+1}$. Take 
a monomial $f$ in $S_{i+1}$. Then, 
$f=t_1^{a_1}\cdots t_{s}^{a_s}$ with $\sum_{i=1}^sa_i=i+1$ and
$a_j>0$ for some 
$j$. Thus, $f\in S_1S_{i}$. As $S_1S_i=S_1I_i\subset I_{i+1}$, 
we get $f\in I_{i+1}$. 

(b) The Hilbert polynomial $h_I$ of $S/I$ has degree $\dim(S/I)-1\geq 0$.
Hence, $h_I$ is a non-zero polynomial. If $H_I(i)=\dim_K(S/I)_i=0$
for some $i$, then $S_i=I_i$. Thus, by (a), 
$H_I(d)$ vanishes for $d\geq i$, a 
contradiction because the Hilbert polynomial of $S/I$ is non-zero.  
\end{proof}

Next, we recall and prove a general fact about $1$-dimensional
Cohen-Macaulay 
graded ideals:  {\it the Hilbert function is increasing 
until it reaches a constant value\/}. This behaviour was pointed
out in \cite[p.~456]{duursma-renteria-tapia} 
(resp. \cite[Remark~1.1, p.~166]{geramita-cayley-bacharach}) for
finite (resp. infinite) 
fields, see also \cite{DGM}. No proof was given in neither of these
places, likely because 
the result is not hard to show. 

\begin{proposition}
\label{hilbert-function-dim=1} {\rm(i)} If $\dim\, S/I\geq 2$ and
${\rm depth}\, S/I>0$, 
then $H_I(i)<H_I(i+1)$ for
$i\geq 0$.

{\rm(ii)} If ${\rm depth}\, S/I=\dim\, S/I=1$, then there is an
integer $r\geq 0$ such that
$$
1=H_I(0)<H_I(1)<\cdots<H_I(r-1)<H_I(i)=\deg(S/I)\hspace{.5cm} {\it for}
\ i\geq r.
$$
\end{proposition}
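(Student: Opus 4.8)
The plan is to reduce to the case of an infinite base field and then cut $S/I$ by a linear non‑zerodivisor, which transfers the problem to the Hilbert function of a ring of one lower dimension, where Lemma~\ref{nov6-11-grace} does the rest. First I would replace $K$ by an infinite extension field $\overline{K}$ and $I$ by $I\overline{S}$, with $\overline{S}=\overline{K}[t_1,\ldots,t_s]$; by faithful flatness this changes neither the Hilbert function of $S/I$, nor its Krull dimension, nor its depth, nor its degree (the last because it is read off from the eventual value of the Hilbert function), so we may assume $K$ is infinite. Since ${\rm depth}\,S/I>0$, the irrelevant ideal $\mathfrak{m}=(t_1,\ldots,t_s)$ is not associated to $S/I$, and a prime‑avoidance argument (valid because $K$ is infinite and ${\rm Ass}(S/I)$ is finite) yields a linear form $h\in S_1$ that is a non‑zerodivisor on $S/I$. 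From the exact sequence of graded modules
$$
0\longrightarrow (S/I)(-1)\stackrel{h}{\longrightarrow} S/I\longrightarrow S/(I,h)\longrightarrow 0
$$
I would read off, in each degree $d$, the identity $H_I(d)=H_I(d-1)+H_{(I,h)}(d)$, and, since a non‑zerodivisor lies in no minimal prime of $S/I$, the equality $\dim S/(I,h)=\dim S/I-1$.

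For part (i) one has $\dim S/(I,h)\geq 1$, so Lemma~\ref{nov6-11-grace}(b) gives $H_{(I,h)}(d)>0$ for every $d\geq 0$, and the displayed identity yields $H_I(i)<H_I(i+1)$ for all $i\geq 0$. For part (ii), $\dim S/(I,h)=0$, so $S/(I,h)$ is Artinian and $H_{(I,h)}(d)=0$ for $d\gg 0$; moreover, by Lemma~\ref{nov6-11-grace}(a), once $H_{(I,h)}$ vanishes it remains $0$. Hence $H_I$ is nondecreasing and eventually equal to some constant $c$. I would then show $H_I$ cannot plateau before reaching $c$: if $H_I(d-1)=H_I(d)$ for some $d\geq 1$, then $H_{(I,h)}(d)=0$, so $H_{(I,h)}(d')=0$ and $H_I(d')=H_I(d-1)$ for all $d'\geq d-1$, which forces $H_I(d-1)=c$ already. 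Since $H_I(d)=c$ for $d\gg 0$ and $\dim S/I=1$, the Hilbert polynomial of $S/I$ is the constant $c$, i.e. $\deg S/I=c$. Letting $r$ be the least integer with $H_I(r)=\deg S/I$ and using $H_I(0)=\dim_K S_0/I_0=1$, I obtain $1=H_I(0)<H_I(1)<\cdots<H_I(r-1)<H_I(i)=\deg S/I$ for $i\geq r$ (in the degenerate case $r=0$ the chain collapses and the statement just asserts $H_I(i)=\deg S/I=1$ for all $i$).

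I do not expect anything deep here: the base change to an infinite field and the production of the linear non‑zerodivisor $h$ are routine, and the one point that needs genuine care is the bookkeeping in part (ii) — namely pinning down that $H_I$ is \emph{strictly} increasing up to the first index at which it attains its eventual value, and identifying that value with $\deg S/I$. Both of these rest squarely on Lemma~\ref{nov6-11-grace}, the first on part (a) (a vanishing of $H_{(I,h)}$ propagates) and the second on the fact that $\dim S/I=1$ makes the Hilbert polynomial a nonzero constant.
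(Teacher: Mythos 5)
Your proposal is correct and follows essentially the same route as the paper: base change to an infinite field (preserving the Hilbert function, dimension, and depth), a linear non-zerodivisor $h$, the exact sequence $0\to (S/I)(-1)\to S/I\to S/(I,h)\to 0$, and Lemma~\ref{nov6-11-grace} to control $H_{(I,h)}$. The only cosmetic difference is that in part (i) you invoke Lemma~\ref{nov6-11-grace}(b) directly where the paper rederives the same contradiction from part (a); the substance is identical.
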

\begin{proof} Consider the algebraic
closure $\overline{K}$ of the field $K$. We set
$$
\overline{S}=S\otimes_K \overline{K}\ \mbox{ and }\ \overline{I}=I
\overline{S}.
$$
By \cite[Lemma 1.1]{Sta1}, $S/I$ and $\overline{S}/\overline{I}$ have
the same Krull  
dimension, the same depth, and the same Hilbert function. 
This follows by considering the minimal free resolution
$\mathbf{F}_\star$ of $S/I$ and observing that
$\mathbf{F}_\star\otimes_K\overline{K}$ is the minimal free resolution
of $\overline{S}/\overline{I}$ and has the same numerical data
(Betti numbers and shifts) as $\mathbf{F}_\star$. Hence,
replacing $K$ by $\overline{K}$, we
may assume that $K$ is  
infinite. As $S/I$ has positive depth, there is $h\in S_1$ which is a
non zero-divisor of
$S/I$. Applying the function $\dim_K(\cdot)$ to the exact sequence 
$$
0\longrightarrow (S/I)[-1] \stackrel{h}{\longrightarrow}
S/I \longrightarrow S/(h,I)\longrightarrow 0,
$$
we get $H_I(i+1)-H_I(i)=H(i+1)\geq 0$ for $i\geq 0$, where
$H(i)=\dim_K(S/(h,I))_i$. We set $S'=S/(h,I)$. Notice that
$\dim(S')=\dim(S/I)-1$. 

(i) If $H(i+1)=0$ for some $i\geq 0$, then, by
Lemma~\ref{nov6-11-grace}(a), $\dim_K(S')<\infty$. Hence 
$S'$ is Artinian, i.e., $\dim(S')=0$, a
contradiction. Thus, $H_I(i+1)>H_I(i)$ for $i\geq 0$.

(ii) Since $\dim(S/I)=1$, the Hilbert polynomial of $S/I$ is a 
non-zero constant equal to ${\rm deg}(S/I)$. Let $r\geq 0$ be the
first integer such that 
$H_I(r)=H_I(r+1)$, 
thus $S'_{r+1}=(0)$, i.e., $S_{r+1}=(h,I)_{r+1}$. 
Then, by Lemma~\ref{nov6-11-grace}(a), $S'_i=(0)$ for $i\geq r+1$. Hence,
the Hilbert function of $S/I$ is constant for $i\geq r$ 
and strictly increasing on
$[0,r-1]$. 
\end{proof}

\section{The degree of a lattice ring}\label{degree-lattice-section}

We continue to use the notation and definitions used in
Sections~\ref{intro-degree-lattice} and
\ref{hilbert-function-section}. 
Given a homogeneous lattice
$\mathcal{L}\subset\mathbb{Z}^s$ of rank $s-1$, in this section we describe
the torsion subgroup of $\mathbb{Z}^s/\mathcal{L}$ using linear
algebra and Gr\"obner bases techniques. Then, we show that the
degree of the lattice ring $S/I(\mathcal{L})$ is
the order of the torsion subgroup of
$\mathbb{Z}^s/\mathcal{L}$. 

Recall that a binomial in $S$ is a polynomial of the
form $t^a-t^b$, where 
$a,b\in \mathbb{N}^s$ and where, if
\mbox{$a=(a_1,\dots,a_s)\in\mathbb{N}^s$}, we set 
\[
t^a=t_1^{a_1}\cdots t_s^{a_s}\in S. 
\]
A polynomial of the form $t^a-t^b$ is usually referred 
to as a {\it pure binomial\/} \cite{EisStu}, although here we are dropping the
adjective ``pure''.  A {\it binomial ideal\/} is an ideal generated by binomials. 

Given $a=(a_i)\in {\mathbb Z}^s$, the set ${\rm supp}(a)=\{i\, |\,
a_i\neq 0\}$ is called the {\it support\/} of
$a$. The vector $a$ can be written uniquely as $a=a^+-a^-$, 
where $a^+$ and $a^-$ are two nonnegative vectors 
with disjoint support, the {\it positive\/} and 
the {\it negative\/} part of $a$ respectively. 

\begin{definition}\label{lattice-ideal-def}\rm 
Let $\mathcal{L}\subset \mathbb{Z}^s$ be a {\it lattice\/}, that is, 
$\mathcal{L}$ is a subgroup of $\mathbb{Z}^s$. The {\it lattice
ideal\/} of $\mathcal{L}$ is the binomial ideal
$$
I(\mathcal{L}):=(\{t^{a^+}-t^{a^-}\vert\, 
a\in\mathcal{L}\})\subset S.
$$
The {\it lattice ring\/} of $\mathcal{L}$ is the quotient ring
$S/I(\mathcal{L})$.
\end{definition}

Lattice ideals have been studied
extensively, see  \cite{EisStu,eto,ci-lattice,cca,morales-thoma} and the
references therein. The concept of a lattice ideal is a natural
generalization of a toric ideal 
\cite[Corollary~7.1.4]{monalg}. A lattice ideal $I(\mathcal{L})$ is a
toric ideal if and only if $\mathbb{Z}^s/\mathcal{L}$ is
torsion-free \cite[Theorem~ 7.4]{cca}. 

The next lemma gives the general form of the binomials in a
lattice ideal. 

\begin{lemma}\label{aug-25-13} Let 
$\mathcal{L}\subset\mathbb{Z}^s$ be a lattice and let $t^a-t^b$ be a
binomial in $S$. Then, $a-b$ is in $\mathcal{L}$ if and only if
$t^a-t^b$ 
 is in $I(\mathcal{L})$. 
\end{lemma}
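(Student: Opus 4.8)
The plan is to prove both implications, with the "only if" direction being the routine one and the "if" direction being the place where some care is needed. For the forward implication, suppose $a-b\in\mathcal{L}$. Write $c=a-b$ and decompose $c=c^+-c^-$ into its positive and negative parts. By definition of $I(\mathcal{L})$, the binomial $t^{c^+}-t^{c^-}$ lies in $I(\mathcal{L})$. I would then observe that $a=c^++\delta$ and $b=c^-+\delta$ for the common vector $\delta=a-c^+=b-c^-\in\mathbb{N}^s$ (this is exactly the statement that the positive and negative parts have disjoint support, so that whenever the $i$-th coordinate of $a$ and $b$ differ, the smaller one contributes to $\delta$ and the difference contributes to $c^\pm$). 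Hence
$$
t^a-t^b = t^\delta\bigl(t^{c^+}-t^{c^-}\bigr)\in I(\mathcal{L}),
$$
since $I(\mathcal{L})$ is an ideal.

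For the converse, suppose $t^a-t^b\in I(\mathcal{L})$; I want to conclude $a-b\in\mathcal{L}$. The cleanest route is to use the standard $K$-linear splitting of $S$ according to the equivalence relation on monomials generated by $t^u\sim t^v$ whenever $u-v\in\mathcal{L}$. Concretely, consider the quotient map $\pi\colon S\to S/I(\mathcal{L})$ and note that the generators $t^{w^+}-t^{w^-}$ ($w\in\mathcal{L}$) identify exactly the monomials $t^u,t^v$ with $u-v\in\mathcal{L}$; I would argue that the images of the monomials in distinct $\mathcal{L}$-classes are $K$-linearly independent in $S/I(\mathcal{L})$. Granting this, $\pi(t^a)=\pi(t^b)$ forces $t^a$ and $t^b$ to lie in the same $\mathcal{L}$-class, i.e.\ $a-b\in\mathcal{L}$, as desired.

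The linear-independence claim is the one point that needs justification, and it is the main obstacle. One way is to build an explicit $K$-algebra homomorphism detecting the class: extend scalars and use the surjection $S\to K[\mathbb{Z}^s/\mathcal{L}]$, $t_i\mapsto \overline{e_i}$, sending a monomial $t^u$ to the group-ring element $\overline{u}$; its kernel contains every generator $t^{w^+}-t^{w^-}$ with $w\in\mathcal{L}$, hence contains $I(\mathcal{L})$, and two monomials have the same image precisely when their exponent vectors are congruent mod $\mathcal{L}$. Thus if $t^a-t^b\in I(\mathcal{L})$ maps to $0$, then $\overline{a}=\overline{b}$ in $\mathbb{Z}^s/\mathcal{L}$, i.e.\ $a-b\in\mathcal{L}$. (Alternatively, one can cite the monomial-class decomposition of a binomial ideal from \cite{EisStu} or \cite{cca}; but giving the group-ring map keeps the argument self-contained.) This completes the proof.
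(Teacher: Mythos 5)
Your proof is correct, and the converse direction takes a genuinely different route from the paper's. For the forward implication you do essentially what the paper does (the paper just says ``by definition''), but you spell out the factorization $t^a-t^b=t^{\delta}(t^{c^+}-t^{c^-})$ with $\delta_i=\min(a_i,b_i)$, which is the detail the paper leaves implicit. For the converse, the paper introduces the equivalence relation $t^{c}\sim_{\mathcal{L}}t^{d}$ iff $c-d\in\mathcal{L}$, observes that the monomials $t^a,t^b$ do not lie in $I(\mathcal{L})$, and then cites \cite[Lemma~2.2]{ElVi} to conclude $t^a\sim_{\mathcal{L}}t^b$; you instead construct the $K$-algebra map $S\to K[\mathbb{Z}^s/\mathcal{L}]$, $t_i\mapsto\overline{e_i}$, note that it kills every generator $t^{w^+}-t^{w^-}$ ($w\in\mathcal{L}$) and hence all of $I(\mathcal{L})$, and read off $\overline{a}=\overline{b}$ from the $K$-linear independence of distinct group elements in the group ring. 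Your argument is self-contained and, as a byproduct, justifies the fact (used but not proved in the paper's argument) that a lattice ideal contains no monomials; the paper's version is shorter at the cost of outsourcing the key step to an external lemma. Two cosmetic remarks: the map $S\to K[\mathbb{Z}^s/\mathcal{L}]$ need not be surjective (its image is the semigroup ring of the image of $\mathbb{N}^s$ in $\mathbb{Z}^s/\mathcal{L}$), and no extension of scalars is needed; neither point is actually used in your argument, so nothing breaks.
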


\begin{proof} If $a-b\in\mathcal{L}$, then by definition of a
lattice ideal one has $t^a-t^b\in I(\mathcal{L})$. To show the
converse assume that $t^a-t^b$ is in $I(\mathcal{L})$. We 
define an equivalence relation $\sim_\mathcal{L}$ on the set of
monomials of $S$ by 
$t^{c}\sim_\mathcal{L} t^{d}$ if and only if $c-d\in \mathcal{L}$.
Since $t^{a}$ and $t^{b}$ are not in
$I(\mathcal{L})$, by \cite[Lemma~2.2]{ElVi}, we get 
that $t^{a}\sim_\mathcal{L}
t^{b}$. Thus, $a-b\in\mathcal{L}$. 
\end{proof}

Given a binomial $g=t^a-t^b$, we set $\widehat{g}=a-b$. If $B$ is a
subset of $\mathbb{Z}^s$, $\langle B\rangle$ denotes the subgroup of 
$\mathbb{Z}^s$ generated by $B$. 

A lattice ideal is defined by a unique lattice.  

\begin{lemma}{\cite{ci-lattice}}\label{dec21-11} Let
$\mathcal{L}\subset\mathbb{Z}^s$ be 
a lattice and let $I(\mathcal{L})$ be its lattice ideal. If
$g_1,\ldots,g_r$ is a set of binomials that generate
$I(\mathcal{L})$, then
$\mathcal{L}=\langle\widehat{g}_1,\ldots,\widehat{g}_r\rangle$. In
particular if $L$ is a lattice ideal, there is a unique lattice
$\mathcal{L}$ such that $L=I(\mathcal{L})$.
\end{lemma}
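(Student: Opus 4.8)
The plan is to introduce the auxiliary lattice $\mathcal{L}' := \langle\widehat{g}_1,\ldots,\widehat{g}_r\rangle$ and prove $\mathcal{L}=\mathcal{L}'$ by a double inclusion, using Lemma~\ref{aug-25-13} as the main tool in each direction.

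First I would establish $\mathcal{L}'\subseteq\mathcal{L}$. Write each generator as $g_i=t^{a_i}-t^{b_i}$, so $\widehat{g}_i=a_i-b_i$. Since $g_1,\ldots,g_r$ generate $I(\mathcal{L})$, each $g_i$ lies in $I(\mathcal{L})$, and Lemma~\ref{aug-25-13} gives $\widehat{g}_i=a_i-b_i\in\mathcal{L}$. As $\mathcal{L}$ is a subgroup of $\mathbb{Z}^s$, it contains the subgroup generated by the $\widehat{g}_i$, i.e. $\mathcal{L}'\subseteq\mathcal{L}$.

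For the reverse inclusion I would first observe that $I(\mathcal{L})\subseteq I(\mathcal{L}')$. Indeed, by construction $\widehat{g}_i\in\mathcal{L}'$, so applying Lemma~\ref{aug-25-13} to the lattice $\mathcal{L}'$ yields $g_i=t^{a_i}-t^{b_i}\in I(\mathcal{L}')$ for every $i$; since the $g_i$ generate $I(\mathcal{L})$, this gives $I(\mathcal{L})\subseteq I(\mathcal{L}')$. Now take an arbitrary $a\in\mathcal{L}$. Then $t^{a^+}-t^{a^-}\in I(\mathcal{L})\subseteq I(\mathcal{L}')$, and Lemma~\ref{aug-25-13} applied to $\mathcal{L}'$ gives $a=a^+-a^-\in\mathcal{L}'$. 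Hence $\mathcal{L}\subseteq\mathcal{L}'$, and combining the two inclusions, $\mathcal{L}=\mathcal{L}'$. For the uniqueness statement: if $L$ is a lattice ideal, then $L=I(\mathcal{L})$ for some lattice $\mathcal{L}$; since $S$ is Noetherian and $I(\mathcal{L})$ is generated by binomials, finitely many of them, say $g_1,\ldots,g_r$, already generate $L$, so by what was just proved $\mathcal{L}=\langle\widehat{g}_1,\ldots,\widehat{g}_r\rangle$ is completely determined by $L$. In particular, if $L=I(\mathcal{L}_1)=I(\mathcal{L}_2)$, choosing a common finite set of binomial generators of $L$ forces $\mathcal{L}_1=\mathcal{L}_2$.

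There is no genuinely hard step here once Lemma~\ref{aug-25-13} is in hand; the only point that requires a moment's thought is to realize that the inclusion $\mathcal{L}\subseteq\mathcal{L}'$ is cleanest if one routes the argument through the ideal $I(\mathcal{L}')$ rather than attempting to express an arbitrary $a\in\mathcal{L}$ directly as a $\mathbb{Z}$-combination of the $\widehat{g}_i$, and to keep track of which lattice Lemma~\ref{aug-25-13} is being applied to at each invocation.
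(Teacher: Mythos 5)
Your proof is correct. The paper does not actually prove this lemma --- it is quoted from \cite{ci-lattice} --- so there is no in-text argument to compare against; but your double inclusion is a complete and self-contained derivation: the easy direction of Lemma~\ref{aug-25-13} gives both $\langle\widehat{g}_1,\ldots,\widehat{g}_r\rangle\subseteq\mathcal{L}$ and $I(\mathcal{L})\subseteq I(\mathcal{L}')$, and the hard direction, applied to the lattice $\mathcal{L}'$, then yields $\mathcal{L}\subseteq\mathcal{L}'$; the uniqueness claim follows as you say by extracting a finite binomial generating set (Noetherianity) and applying the first part to both lattices.
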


The following is a well known description of lattice
ideals that follows from \cite[Corollary~2.5]{EisStu}.

\begin{theorem}{\rm\cite{EisStu}}\label{jun12-02} If $L$ is a binomial ideal 
of $S$, then $L$ is a lattice ideal if and only if $t_i$ is a non-zero
divisor of $S/L$ 
for all $i$. 
\end{theorem}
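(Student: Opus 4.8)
The plan is to work in the Laurent polynomial ring $R:=S[(t_1\cdots t_s)^{-1}]=K[t_1^{\pm 1},\ldots,t_s^{\pm 1}]$, identified with the group ring $K[\mathbb{Z}^s]$ via $t^a\mapsto a$, and to use two elementary facts. First, the multiplicative set generated by $t_1,\ldots,t_s$ consists of the monomials $t^a$ with $a\in\mathbb{N}^s$; since a product of non-zero-divisors is a non-zero-divisor, $t_1,\ldots,t_s$ are \emph{all} non-zero-divisors on $S/L$ precisely when the localization map $S/L\to R/LR$ is injective, i.e.\ when $L=LR\cap S$. Second, a pure binomial $t^a-t^b$ equals $t^b(t^{a-b}-1)$ in $R$, so after clearing the unit $t^{-b}$ every binomial generator becomes one of the form $t^c-1$; hence every binomial ideal of $R$ is generated by elements $t^c-1$, and the set of such exponents $c$ may be enlarged to the subgroup it generates, since $t^{c+c'}-1=t^c(t^{c'}-1)+(t^c-1)$ and $t^{-c}-1=-t^{-c}(t^c-1)$.

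For the implication ``$L$ a lattice ideal $\Rightarrow$ all $t_i$ non-zero-divisors'', I would write $L=I(\mathcal{L})$ and grade $S$ by $\mathbb{Z}^s/\mathcal{L}$ via $\deg(t_i)=\overline{e_i}$, so $\deg(t^a)=\overline{a}$. Each generator $t^{a^+}-t^{a^-}$ (with $a\in\mathcal{L}$) is homogeneous, because $\overline{a^+}=\overline{a^-}$, so $I(\mathcal{L})$ is graded. I would first record that $I(\mathcal{L})$ contains no monomial: the ring map $S\to K[\mathbb{Z}^s/\mathcal{L}]$ sending $t^a$ to the unit $\overline{a}$ kills every generator of $I(\mathcal{L})$ but no monomial. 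Fixing $g\in\mathbb{Z}^s/\mathcal{L}$, Lemma~\ref{aug-25-13} then gives that all monomials $t^a$ with $\overline{a}=g$ share one common image $\xi_g$ in $S/I(\mathcal{L})$, nonzero by the previous remark; hence a homogeneous element $f=\sum_{\overline{a}=g}\lambda_a t^a$ satisfies $f\equiv(\sum_a\lambda_a)\xi_g$ modulo $I(\mathcal{L})$, so $f\in I(\mathcal{L})$ iff $\sum_a\lambda_a=0$. Since $t_if=\sum_a\lambda_a t^{a+e_i}$ lies in $I(\mathcal{L})$ under the \emph{same} condition, $t_if\in I(\mathcal{L})$ forces $f\in I(\mathcal{L})$ for homogeneous $f$, and then for all $f$ by splitting into homogeneous components.

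For the converse, let $L$ be generated by binomials $t^{a_1}-t^{b_1},\ldots,t^{a_r}-t^{b_r}$ with all $t_i$ non-zero-divisors on $S/L$, so $L=LR\cap S$. Set $c_j=a_j-b_j$ and $\mathcal{L}=\langle c_1,\ldots,c_r\rangle$. By the facts above, $LR=(t^{c_1}-1,\ldots,t^{c_r}-1)$, and $\{c:t^c-1\in LR\}$ is a subgroup containing every $c_j$, hence containing $\mathcal{L}$; thus $LR=(\{t^c-1:c\in\mathcal{L}\})=I(\mathcal{L})R$. Applying the direction just proved to the lattice ideal $I(\mathcal{L})$, its variables are non-zero-divisors, so $I(\mathcal{L})=I(\mathcal{L})R\cap S=LR\cap S=L$; hence $L$ is a lattice ideal. (The case $L=S$ cannot occur, since every pure binomial vanishes at $(1,\ldots,1)$, so $L\subseteq(t_1-1,\ldots,t_s-1)\subsetneq S$.)

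The step I expect to be the main obstacle is the forward implication — specifically, checking that $I(\mathcal{L})$ has no monomial and that each multigraded component of $S/I(\mathcal{L})$ is at most one-dimensional with multiplication by $t_i$ sending a generator to a generator, which is exactly what makes $t_i$ injective. After that the converse is essentially bookkeeping with contractions from $R$; the only subtlety to flag is that using the forward implication inside the converse is \emph{not} circular, since it is applied to the freshly constructed ideal $I(\mathcal{L})$, not to $L$ itself.
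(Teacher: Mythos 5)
Your proof is correct, but it takes a genuinely different route from the paper's. The paper's argument is short because it leans on two external inputs: \cite[Corollary~2.5]{EisStu}, quoted as the fact that the saturation $(L\colon h^\infty)$ of a monomial-free binomial ideal is a lattice ideal $I(\mathcal{D})$ with $\mathcal{D}$ generated by the exponent vectors, and Lemma~\ref{dec21-11} (uniqueness of the defining lattice); both directions then reduce to the single identity $(L\colon h^\infty)=L$, which is checked directly against the non-zero-divisor hypothesis. You instead make the whole statement self-contained. For the forward direction you exploit the $\mathbb{Z}^s/\mathcal{L}$-multigrading: after checking that $I(\mathcal{L})$ contains no monomial (via the group-algebra map) and that each multigraded component of $S/I(\mathcal{L})$ is spanned by a single nonzero class $\xi_g$, membership in $I(\mathcal{L})$ becomes the vanishing of a coefficient sum, which is visibly preserved under multiplication by $t_i$ --- note you only need the easy, by-definition half of Lemma~\ref{aug-25-13} here, so there is no hidden dependence on \cite{ElVi}. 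For the converse you contract from the Laurent ring: $LR=I(\mathcal{L})R$ by the subgroup trick on $\{c: t^c-1\in LR\}$, and then $L=LR\cap S=I(\mathcal{L})R\cap S=I(\mathcal{L})$, the last equality being the forward direction applied to the freshly built $I(\mathcal{L})$ (correctly flagged as non-circular). What each approach buys: the paper's proof is three lines once \cite{EisStu} is granted; yours costs more writing but eliminates the black box, and as a byproduct it reproves the special case of Proposition~\ref{saturation-gives-lattice} needed here, since $LR\cap S$ is exactly $(L\colon h^\infty)$. Your handling of the degenerate case $L=S$ is a small point the paper glosses over and is worth keeping.
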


\begin{proof} Let $g_1,\ldots,g_r$ be a set of generators of $L$
consisting of binomials. Since $L$ does not contain any
monomials, by \cite[Corollary~2.5]{EisStu}, there is a unique 
lattice $\mathcal{D}\subset\mathbb{Z}^s$ such that 
\begin{equation}\label{23-08-13}
(L\colon h^\infty)=I(\mathcal{D}),\tag{$\dag$}
\end{equation}
where $h=t_1\cdots t_s$ and 
$(L\colon h^{\infty}):=\{f\in S\vert\, fh^m\in L\mbox{ for some
}m\geq 1\}$. 
It is not hard to see that 
$\mathcal{D}$ is generated by
$\widehat{g}_1,\ldots,\widehat{g}_r$ 
(cf. Proposition~\ref{saturation-gives-lattice}).   

$\Rightarrow$) If $L=I(\mathcal{L})$ for some lattice $\mathcal{L}$,
by Lemma~\ref{dec21-11}, $\mathcal{L}=\mathcal{D}$. 
Thus, $(L\colon h^\infty)=L$. To show that $t_i$ is a non 
zero-divisor of $S/L$ assume that there is $\overline{f}\in S/L$ 
such that $t_i\overline{f}=\overline{0}$. Thus, $t_if\in L$ and $hf\in
L$. Hence $f\in L$, i.e., $\overline{f}=\overline{0}$, as required.

$\Leftarrow$) By Eq.~(\ref{23-08-13}), we need only show the equality  
$(L\colon h^\infty)=L$. The inclusion $(L\colon h^\infty)\supset L$ is
clear. To show the reverse inclusion take $f\in (L\colon h^\infty)$,
i.e., $h^mf\in L$. Since $t_i$ is a nonzero divisor for all $i$, we
get that $f\in L$ as required. 
\end{proof}

The unique lattice that defines a graded lattice ideal is homogeneous
in the following sense.  

\begin{definition}\label{homogeneous-lattice-def} If
$a=(a_1,\ldots,a_s)\in\mathbb{Z}^s$, we set 
$|a|=\sum_{i=1}^sa_i$. A lattice $\mathcal{L}$ is called {\it
homogeneous\/} if $|a|=0$ for all $a\in\mathcal{L}$. 
\end{definition}

\begin{remark}\label{remark-lh-rh} (i) A lattice is homogeneous if
and only if its lattice 
ideal is graded. This follows from Lemma~\ref{dec21-11}. 

(ii) If $\mathcal{L}$ is a homogeneous lattice in $\mathbb{Z}^s$ of
rank $s-1$, then $S/I(\mathcal{L})$ is a Cohen-Macaulay ring of
dimension $1$. This follows from Theorem~\ref{jun12-02} and using
the fact that the height of $I(\mathcal{L})$ is the rank of
$\mathcal{L}$ \cite[Proposition~7.5]{cca}.
\end{remark}

\begin{definition}\rm The {\it torsion subgroup\/} of an abelian group
$(M,+)$, denoted by $T(M)$, is the set of all $x$ in $M$ such that $px=0$
for some $0\neq p\in\mathbb{N}$. 
\end{definition}

Next, we determine a generating set for the torsion subgroup 
of $\mathbb{Z}^s/\mathcal{L}$. 

\begin{lemma}\label{may5-12} Let $\mathcal{L}\subset\mathbb{Z}^s$ be
a homogeneous lattice of 
rank $s-1$ and let $\mathbb{Q}\mathcal{L}$ be the $\mathbb{Q}$-linear
space spanned 
by $\mathcal{L}$. Then  
\begin{itemize}
\item[(a)]
$\mathbb{Q}\mathcal{L}\cap\mathbb{Z}^s=
\mathbb{Z}(e_1-e_s)\oplus\cdots\oplus\mathbb{Z}(e_{s-1}-e_s)$, where
$e_i$ is the $i^{\textup{th}}$ unit vector in $\mathbb{Q}^s$.
\item[(b)]
$T(\mathbb{Z}^s/\mathcal{L})=
\mathbb{Z}(e_1-e_s)\oplus\cdots\oplus\mathbb{Z}(e_{s-1}-e_s)/\mathcal{L}$.
\end{itemize}
\end{lemma}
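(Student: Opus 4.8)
The plan is to prove (a) first and then derive (b) as a quick consequence. For part (a), let $W = \mathbb{Q}\mathcal{L} \cap \mathbb{Z}^s$. Since $\mathcal{L}$ is homogeneous of rank $s-1$, the $\mathbb{Q}$-linear space $\mathbb{Q}\mathcal{L}$ is contained in the hyperplane $H = \{a \in \mathbb{Q}^s \mid |a| = 0\}$, which also has dimension $s-1$; hence $\mathbb{Q}\mathcal{L} = H$. The vectors $e_1 - e_s, \ldots, e_{s-1} - e_s$ clearly lie in $H \cap \mathbb{Z}^s = W$, and they are $\mathbb{Z}$-linearly independent, so $\mathbb{Z}(e_1 - e_s) \oplus \cdots \oplus \mathbb{Z}(e_{s-1} - e_s) \subseteq W$. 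For the reverse inclusion, take $a = (a_1, \ldots, a_s) \in W$; then $|a| = 0$, so $a_s = -(a_1 + \cdots + a_{s-1})$, and a direct check shows $a = \sum_{i=1}^{s-1} a_i (e_i - e_s)$. This proves (a).

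For part (b), I would use the general fact that if $\mathcal{L} \subseteq M \subseteq \mathbb{Z}^s$ are subgroups with $M/\mathcal{L}$ finite, and if $M = \mathbb{Q}\mathcal{L} \cap \mathbb{Z}^s$, then $M/\mathcal{L} = T(\mathbb{Z}^s/\mathcal{L})$. The inclusion $M/\mathcal{L} \subseteq T(\mathbb{Z}^s/\mathcal{L})$ is immediate: any $\overline{a}$ with $a \in M = \mathbb{Q}\mathcal{L} \cap \mathbb{Z}^s$ satisfies $na \in \mathcal{L}$ for some positive integer $n$ (clear denominators in an expression of $a$ as a $\mathbb{Q}$-combination of a $\mathbb{Z}$-basis of $\mathcal{L}$), so $n\overline{a} = \overline{0}$. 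Conversely, if $\overline{a} \in T(\mathbb{Z}^s/\mathcal{L})$, then $na \in \mathcal{L}$ for some $n \in \mathbb{N}_+$, hence $a \in \frac{1}{n}\mathcal{L} \subseteq \mathbb{Q}\mathcal{L}$, and since $a \in \mathbb{Z}^s$ we get $a \in \mathbb{Q}\mathcal{L} \cap \mathbb{Z}^s = M$. Combining this with part (a) gives $T(\mathbb{Z}^s/\mathcal{L}) = \bigl(\mathbb{Z}(e_1 - e_s) \oplus \cdots \oplus \mathbb{Z}(e_{s-1} - e_s)\bigr)/\mathcal{L}$, which is the assertion of (b).

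The only point requiring mild care is making sure the quotient in (b) makes sense, i.e. that $\mathcal{L}$ is genuinely a subgroup of $\mathbb{Z}(e_1 - e_s) \oplus \cdots \oplus \mathbb{Z}(e_{s-1} - e_s)$; but this is exactly the containment $\mathcal{L} \subseteq \mathbb{Q}\mathcal{L} \cap \mathbb{Z}^s$ furnished by part (a), since $\mathcal{L}$ is homogeneous. I expect no serious obstacle here — the argument is elementary linear algebra over $\mathbb{Z}$ and $\mathbb{Q}$. The one place to be slightly careful is the very first step of (a), namely confirming that $\dim_{\mathbb{Q}} \mathbb{Q}\mathcal{L} = s-1$ forces $\mathbb{Q}\mathcal{L} = H$ rather than merely $\mathbb{Q}\mathcal{L} \subseteq H$; this follows because both spaces have the same (finite) dimension $s-1$.
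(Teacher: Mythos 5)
Your proposal is correct, and its overall skeleton matches the paper's: part (a) by double inclusion, then part (b) via the identification $T(\mathbb{Z}^s/\mathcal{L})=(\mathbb{Q}\mathcal{L}\cap\mathbb{Z}^s)/\mathcal{L}$. Two sub-steps are executed differently, though. For the inclusion ``$\supset$'' in (a), you observe that $\mathbb{Q}\mathcal{L}$ sits inside the hyperplane $H=\{a\in\mathbb{Q}^s\mid |a|=0\}$ and conclude $\mathbb{Q}\mathcal{L}=H$ by comparing dimensions, whereas the paper instead shows directly that each $e_k-e_s$ lies in $\mathbb{Q}\mathcal{L}$ by writing $e_k=\mu_{ks}e_s+\sum_i\lambda_{ki}\alpha_i$ with $\alpha_i\in\mathcal{L}$ and pairing with the all-ones vector to force $\mu_{ks}=1$; your dimension count is the cleaner route and subsumes the paper's computation. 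For (b), the paper simply cites an external lemma for the fact that the torsion subgroup of $\mathbb{Z}^s/\mathcal{L}$ equals $(\mathbb{Q}\mathcal{L}\cap\mathbb{Z}^s)/\mathcal{L}$, while you prove both inclusions from scratch (clearing denominators for one direction, and $na\in\mathcal{L}\Rightarrow a\in\tfrac{1}{n}\mathcal{L}\subseteq\mathbb{Q}\mathcal{L}$ for the other), making your argument self-contained; the finiteness hypothesis you attach to this general fact is harmless but not actually used in your argument. No gaps.
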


\begin{proof} (a) ``$\subset$'': Take $a=(a_1,\ldots,a_s)$ in
$\mathbb{Q}\mathcal{L}\cap\mathbb{Z}^s$. Then,
$a_s=-a_1-\cdots-a_{s-1}$ and 
we can write 
$$
a=a_1(e_1-e_s)+\cdots+a_{s-1}(e_{s-1}-e_s).
$$
Thus, $a$ is a $\mathbb{Z}$-linear combination of
$e_1-e_s,\ldots,e_{s-1}-e_s$. 

``$\supset$'': It suffices to show that
$e_k-e_s$ is in $\mathbb{Q}\mathcal{L}$ for all $k$. 
The dimension of $\mathbb{Q}\mathcal{L}$ is equal to 
${\rm rank}(\mathcal{L})=s-1$. Notice that
$e_s\notin \mathbb{Q}\mathcal{L}$ 
because $\mathcal{L}$ is homogeneous. Hence, 
$\mathbb{Q}e_s+\mathbb{Q}\mathcal{L}=\mathbb{Q}^s$. Therefore, we can write
$$
e_k=\mu_{ks}e_s+\lambda_{k1}\alpha_{1}+\cdots+\lambda_{km}\alpha_m\ \ \
(\mu_{ks}\in\mathbb{Q};\ \lambda_{ki}\in\mathbb{Q};\
\alpha_j\in\mathcal{L}\mbox{ for all }i,j). 
$$
Hence, taking inner products with $\mathbf{1}=(1,\ldots,1)$ and using
that $\langle\mathbf{1},\alpha_i\rangle=0$ for all $i$, we get
$\mu_{ks}=1$. Thus, $e_k-e_s\in \mathbb{Q}\mathcal{L}$.

(b): By \cite[Lemma~2.3]{ehrhart}, the torsion subgroup of
$\mathbb{Z}^s/\mathcal{L}$ is
$\mathbb{Q}\mathcal{L}\cap\mathbb{Z}^s/\mathcal{L}$. Hence, the
expression for the torsion follows from (a).
\end{proof}

In what follows we shall assume that $\succ$ is the {\it reverse
lexicographical order\/} (revlex order for short) on the
monomials of $S$. This order is given by
$t^b\succ t^a$ if and only if the last non-zero
entry  of $b-a$ is negative. As usual, if $g$ is a
polynomial of $S$, we denote the leading term of $g$ 
by ${\rm in}(g)$. If $L$ is an ideal of $S$, the initial
ideal of $L$, denoted by ${\rm in}(L)$, 
is generated by the leading terms of the polynomials of $L$.  

\begin{remark}\label{jun7-12}
By Buchberger's  algorithm \cite[Theorem~2, p.~89]{CLO} and
\cite[Proposition~6, p.~91]{CLO}, a graded lattice 
ideal $I(\mathcal{L})$ has a unique reduced Gr\"obner basis
$\mathcal{G}$ consisting of 
homogeneous binomials and, by Theorem~\ref{jun12-02}, 
each binomial $t^a-t^b\in\mathcal{G}$
satisfies that ${\rm supp}(a)\cap{\rm supp}(b)=\emptyset$.
\end{remark}

\begin{lemma}\label{may6-12} Let $\mathcal{L}\subset\mathbb{Z}^s$ be
a homogeneous lattice of 
rank $s-1$. Then, given 
$\widetilde{\alpha}=\alpha+\mathcal{L}$ in the torsion subgroup
$T(\mathbb{Z}^s/\mathcal{L})$ there 
exists a unique $a=(a_1,\ldots,a_{s-1},a_s)$ in $\mathbb{Z}^s$ such
that  
\begin{itemize}
\item[(i)] $a_i\geq 0$ for $i=1,\ldots,s-1$,
\item[(ii)] $t_1^{a_1}\cdots t_{s-1}^{a_{s-1}}\notin{\rm
in}(I(\mathcal{L}))$, and  
\item[(iii)] $\widetilde{a}=\widetilde{\alpha}$.
\end{itemize}
\end{lemma}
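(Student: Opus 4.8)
The plan is to use the reverse lexicographical order together with the description of the torsion subgroup from Lemma~\ref{may5-12}. By that lemma, any class $\widetilde{\alpha}$ in $T(\mathbb{Z}^s/\mathcal{L})$ has a representative of the form $\alpha = b_1(e_1-e_s)+\cdots+b_{s-1}(e_{s-1}-e_s)$ with $b_i\in\mathbb{Z}$; in other words, among the representatives of $\widetilde{\alpha}$ there is at least one whose last coordinate is determined by the others, and more importantly there are representatives $c=(c_1,\ldots,c_s)$ with $c_1,\ldots,c_{s-1}\geq 0$ (add a large multiple of each $e_i-e_s$, which lies in $\mathbb{Q}\mathcal{L}\cap\mathbb{Z}^s$; but wait — these generators need not lie in $\mathcal{L}$ itself, so I must be careful). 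The correct move is: the set of representatives of $\widetilde{\alpha}$ is $\alpha+\mathcal{L}$, and since $\mathcal{L}$ is homogeneous of rank $s-1$ it contains vectors with arbitrarily large positive first $s-1$ coordinates — indeed $\mathbb{Q}\mathcal{L}$ contains each $e_i-e_s$, so $\mathcal{L}$ contains some positive integer multiple $d_i(e_i-e_s)$; adding large multiples of these to $\alpha$ produces a representative $c$ with $c_1,\ldots,c_{s-1}\geq 0$. This handles part~(i).

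Next I would address existence of a representative satisfying~(ii). Given a representative $c$ with $c_1,\ldots,c_{s-1}\geq 0$, consider the monomial $t_1^{c_1}\cdots t_{s-1}^{c_{s-1}}$ together with the monomial $t_s^{\,|c_1+\cdots+c_{s-1}|}$ — note that since $\mathcal{L}$ is homogeneous and $\widetilde{\alpha}\in T$, the class has a representative that is a difference of the form (nonnegative vector on coords $1,\ldots,s-1$) minus (multiple of $e_s$). If $t_1^{c_1}\cdots t_{s-1}^{c_{s-1}}\in\mathrm{in}(I(\mathcal{L}))$, then its normal form with respect to the reduced Gröbner basis $\mathcal{G}$ (which by Remark~\ref{jun7-12} consists of homogeneous binomials $t^u-t^v$ with disjoint supports) is a monomial $t^w$ of the same total degree, and by Lemma~\ref{aug-25-13} we have $t_1^{c_1}\cdots t_{s-1}^{c_{s-1}} - t^w\in I(\mathcal{L})$, i.e.\ $(c_1,\ldots,c_{s-1},0)-w\in\mathcal{L}$. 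Thus $w$ is another representative of $\widetilde{\alpha}$, and it is not in $\mathrm{in}(I(\mathcal{L}))$ by definition of the normal form. I still must check $w$ has nonnegative first $s-1$ coordinates — this is automatic since $w\in\mathbb{N}^s$ — and that the last coordinate of $w$ is forced, which follows from homogeneity: $|w| = c_1+\cdots+c_{s-1}$, so $w_s = c_1+\cdots+c_{s-1} - (w_1+\cdots+w_{s-1})\geq 0$ and is determined once $w_1,\ldots,w_{s-1}$ are. But this isn't quite condition~(ii) as stated, which only asks that $t_1^{w_1}\cdots t_{s-1}^{w_{s-1}}\notin\mathrm{in}(I(\mathcal{L}))$; since the revlex order is used and $t_s$ is the smallest variable, and since $\mathcal{G}$-elements have disjoint supports, a monomial purely in $t_1,\ldots,t_{s-1}$ lies in $\mathrm{in}(I(\mathcal{L}))$ iff it is divisible by some $\mathrm{in}(g)$ with $g\in\mathcal{G}$; I would argue the normal form $t^w$ of $t_1^{c_1}\cdots t_{s-1}^{c_{s-1}}$ is actually a monomial in $t_1,\ldots,t_{s-1}$ only — here is where revlex matters: reducing a monomial in $t_1,\ldots,t_{s-1}$ by a binomial $t^u-t^v$ with $t^u = \mathrm{in}$, the leading term under revlex involving $t_s$ with smallest exponent, so any $g\in\mathcal{G}$ whose leading term divides a monomial avoiding $t_s$ must itself avoid $t_s$ in its leading term, and by disjoint support its trailing term may involve $t_s$; repeated reduction can only introduce $t_s$, never remove it, so the normal form $t^w$ has $w_s\geq 0$ but $t_1^{w_1}\cdots t_{s-1}^{w_{s-1}}$ is exactly the non-$t_s$ part — I'd need to verify $t_1^{w_1}\cdots t_{s-1}^{w_{s-1}}\notin\mathrm{in}(I(\mathcal{L}))$ directly, perhaps by instead taking normal forms more carefully.

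For uniqueness: suppose $a$ and $a'$ both satisfy (i)–(iii). Then $\widetilde{a}=\widetilde{a'}$, so $a-a'\in\mathcal{L}$, hence $t^{a^+_{[1,s-1]}}\cdots - t^{(a')^+\cdots}$-type relations put $t_1^{a_1}\cdots t_{s-1}^{a_{s-1}}$ and $t_1^{a'_1}\cdots t_{s-1}^{a'_{s-1}}$ in the same $\sim_{\mathcal{L}}$-class (via Lemma~\ref{aug-25-13}, after clearing the $t_s$ parts — but the $t_s$ exponents are pinned down by homogeneity and by $\widetilde{a}=\widetilde{a'}$, so in fact $a_s=a'_s$ once the other coordinates agree). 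Two distinct monomials not in $\mathrm{in}(I(\mathcal{L}))$ that are $\sim_{\mathcal{L}}$-equivalent would give a binomial in $I(\mathcal{L})$ with neither term in the initial ideal, contradicting that the reduced Gröbner basis's non-leading terms are in normal form / that standard monomials form a $K$-basis of $S/I(\mathcal{L})$. So $a=a'$. The main obstacle I anticipate is the bookkeeping in the existence part: producing a representative whose first $s-1$ coordinates are nonnegative \emph{and} whose truncated monomial $t_1^{a_1}\cdots t_{s-1}^{a_{s-1}}$ avoids the initial ideal simultaneously — the natural ``take a normal form'' argument must be run on the right monomial and one must exploit that revlex makes $t_s$ least so that reductions behave well with respect to the splitting of coordinates $\{1,\ldots,s-1\}$ versus $\{s\}$; I expect this is exactly why the authors fixed revlex in Remark~\ref{jun7-12}.
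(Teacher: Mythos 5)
Your proposal is correct and follows essentially the same route as the paper: replace $\alpha$ by a representative whose first $s-1$ coordinates are nonnegative using positive multiples of the $e_i-e_s$ lying in $\mathcal{L}$, pass to the normal form with respect to the revlex reduced Gr\"obner basis to get a standard-monomial representative (the paper runs the division algorithm on the binomial $t_1^{c_1}\cdots t_{s-1}^{c_{s-1}}-t_s^{-c_s}$, which is the same computation), and derive uniqueness from the fact that two distinct standard monomials cannot differ by an element of $I(\mathcal{L})$. The one step you flagged as needing further verification is immediate: ${\rm in}(I(\mathcal{L}))$ is a monomial ideal and $t_1^{w_1}\cdots t_{s-1}^{w_{s-1}}$ divides the standard monomial $t^w$, so it cannot lie in ${\rm in}(I(\mathcal{L}))$ either.
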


\begin{proof} First we show the existence of $a$. If
$\alpha\in\mathcal{L}$, then $a=0$ satisfies (i), (ii) and (iii).
Assume that 
$\alpha\notin\mathcal{L}$. By
Lemma~\ref{may5-12}, $\widetilde{e}_i-\widetilde{e}_{s}$ is a
torsion element of $\mathbb{Z}^s/\mathcal{L}$ for $1\leq i\leq s-1$,
that is, there is a positive integer $n_i$ such that $n_i(e_i-e_s)$ is
in $\mathcal{L}$. If $\alpha_i$ is 
the $i^{\textup{th}}$ entry of $\alpha$, there 
are integers $q_i$ and $c_i$ such that $\alpha_i=q_in_i+c_i$ and
$0\leq c_i\leq n_i-1$. Hence, since $|\alpha|=0$, we can write
\begin{eqnarray*}
\alpha&=&\alpha_1(e_1-e_s)+\cdots+\alpha_{s-1}(e_{s-1}-e_s)\\
&=&c_1(e_1-e_s)+\cdots+c_{s-1}(e_{s-1}-e_s)+q_1n_1(e_1-e_s)+
\cdots+q_{s-1}n_{s-1}(e_{s-1}-e_s).
\end{eqnarray*}
If we set
$c=(c_1,\ldots,c_s)=c_1(e_1-e_s)+\cdots+c_{s-1}(e_{s-1}-e_s)$, 
then $\widetilde{c}=\widetilde{\alpha}$, $c\notin\mathcal{L}$ and
$|c|=0$. Consider the 
homogeneous binomial 
$$
f=t_1^{c_1}\cdots t_{s-1}^{c_{s-1}}-t_s^{-c_s}.
$$
Let $\mathcal{G}=\{g_1,\ldots,g_r\}$ be the reduced Gr\"obner
basis of $I(\mathcal{L})$, with respect to the revlex order, then
${\rm in}(I(\mathcal{L}))=({\rm in}(g_1),\ldots,{\rm in}(g_r))$. By
Remark~\ref{jun7-12}, $t_s$ does not divide any of the leading terms of
$g_1,\ldots,g_r$. Hence, by the division algorithm 
\cite[Theorem~3, p.~63]{CLO}, we can
write 
\begin{equation}\label{23-jul-10-1}
f=h_1g_1+\cdots+h_rg_r+g\tag{$*$}
\end{equation}
for some $h_1,\ldots,h_r$ in $S$, where $g=t_1^{b_1}\cdots
t_s^{b_s}-t_s^{-c_s}$ is homogeneous and $t^b=t_1^{b_1}\cdots t_s^{b_s}$ 
is not divisible by any of the leading
terms of $g_1,\ldots,g_r$, i.e., $t^b\notin {\rm
in}(I(\mathcal{L}))$. Thus, $t_1^{b_1}\cdots
t_{s-1}^{b_{s-1}}\notin{\rm in}(I(\mathcal{L}))$. Notice that $b_i>0$
for some $1\leq i\leq s-1$, otherwise $g=0$ and $c$ would
be in $\mathcal{L}$, a contradiction. By Eq.~(\ref{23-jul-10-1}), 
the binomial $f-g$ is in $I(\mathcal{L})$
and simplifies to
$$
f-g=t_1^{c_1}\cdots t_{s-1}^{c_{s-1}}-t_1^{b_1}\cdots t_s^{b_s}.
$$
Hence, by Lemma~\ref{aug-25-13}, 
$(c_1,\ldots,c_{s-1},0)-(b_1,\ldots,b_s)$ is in $\mathcal{L}$.
Consequently, one has  
\begin{equation}\label{may27-12}
(c_1,\ldots,c_{s-1},c_s)-(b_1,\ldots,b_{s-1},b_s+c_s)=
(c_1,\ldots,c_{s-1},0)-(b_1,\ldots,b_{s-1},b_s)\in\mathcal{L}.\tag{$**$}
\end{equation}
Consider the vector $a=(a_1,\ldots,a_s)$, where $a_i=b_i$ for
$i=1,\ldots,s-1$ and $a_{s}=b_s+c_s$. Then, by Eq.~(\ref{may27-12}),
$c-a\in\mathcal{L}$. 
Thus, $\widetilde{a}=\widetilde{c}$. 
For all the above, we get that $a$ 
satisfies (i), (ii) and (iii).

Next, we show the uniqueness of $a$. Assume that there are vectors
$a=(a_1,\ldots,a_s)$ and $a'=(a_1',\ldots,a_s')$ in 
$\mathbb{Z}^s$ that satisfy (i), (ii) and (iii). If $a_i\neq a_i'$ for
some $1\leq i\leq s-1$, then the binomial
$$
h=t_1^{a_1}\cdots t_{s-1}^{a_{s-1}}t_s^{-a_s}-t_1^{a_1'}\cdots
t_{s-1}^{a_{s-1}'}t_s^{-a_s'}
$$
is non-zero and belongs to $I(\mathcal{L})$ because
$a-a'\in\mathcal{L}$ by (iii), a contradiction because none of the two terms of
$h$ are in the initial ideal of $I(\mathcal{L})$ by (ii). Thus, $a_i=a_i'$ 
for $i=1,\ldots,s-1$. Since $|a|=|a'|$, we get $a=a'$.
\end{proof}

\begin{definition} An ideal $I\subset S$ is called a {\it complete
intersection\/} if  there exists $f_1,\ldots,f_{r}$ in $S $ such that
$I=(f_1,\ldots,f_{r})$,  
where $r$ is the height of $I$. 
\end{definition}

A graded ideal $I$ is a complete
intersection if and only if $I$ is generated by a homogeneous regular 
sequence with ${\rm ht}(I)$ elements (see \cite[Proposition~1.3.17,
Lemma~1.3.18]{monalg}). 

\begin{proposition}\label{may6-12-1} If $L\subset S$ is a graded lattice ideal of
dimension $1$, then there are positive integers
$n_1,\ldots,n_{s-1}$ such that 
\begin{itemize}
\item[(a)]$L'=(t_1^{n_1}-t_s^{n_1},\ldots,t_{s-1}^{n_{s-1}}-t_s^{n_{s-1}})
\subset L$, 
\item[(b)] ${\rm reg}(S/(t_s,L))\leq {\rm
reg}(S/(t_s,L'))=\sum_{i=1}^{s-1}(n_i-1)+1$, and
\item[(c)] $H_L(d)=H_L(d-1)=\deg\,
S/L$ for $d\geq\sum_{i=1}^{s-1}(n_i-1)+1$.
\end{itemize}
\end{proposition}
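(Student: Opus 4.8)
The plan is to establish the three parts in order, using the torsion structure of $\mathbb{Z}^s/\mathcal{L}$ from Lemma~\ref{may5-12} together with standard Hilbert-function facts from Section~\ref{hilbert-function-section}. Since $L$ is a graded lattice ideal of dimension one, by Lemma~\ref{dec21-11} there is a unique homogeneous lattice $\mathcal{L}$ with $L=I(\mathcal{L})$, and by Remark~\ref{remark-lh-rh} the rank of $\mathcal{L}$ is $s-1$. By Lemma~\ref{may5-12}(a), each $e_i-e_s$ lies in $\mathbb{Q}\mathcal{L}\cap\mathbb{Z}^s$, hence is a torsion element of $\mathbb{Z}^s/\mathcal{L}$; so there is a least positive integer $n_i$ with $n_i(e_i-e_s)\in\mathcal{L}$ for $i=1,\ldots,s-1$. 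By Lemma~\ref{aug-25-13} the binomial $t_i^{n_i}-t_s^{n_i}$ then belongs to $I(\mathcal{L})=L$, which gives part~(a); set $L'=(t_1^{n_1}-t_s^{n_1},\ldots,t_{s-1}^{n_{s-1}}-t_s^{n_{s-1}})$.

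For part~(b), first I would compute $\mathrm{reg}(S/(t_s,L'))$. Modding out by $t_s$ kills the $t_s$-terms, so $(t_s,L')=(t_s,t_1^{n_1},\ldots,t_{s-1}^{n_{s-1}})$, and therefore $S/(t_s,L')\cong K[t_1,\ldots,t_{s-1}]/(t_1^{n_1},\ldots,t_{s-1}^{n_{s-1}})$, an Artinian complete intersection. Its Hilbert series is $\prod_{i=1}^{s-1}(1+t+\cdots+t^{n_i-1})=\prod_{i=1}^{s-1}\tfrac{1-t^{n_i}}{1-t}$, a polynomial of degree $\sum_{i=1}^{s-1}(n_i-1)$; since the Krull dimension here is $0$, the formula recalled in Section~\ref{hilbert-function-section} gives $\mathrm{reg}(S/(t_s,L'))=\sum_{i=1}^{s-1}(n_i-1)+1$ (it is positive, so we are in the second case of that formula). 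For the inequality $\mathrm{reg}(S/(t_s,L))\le\mathrm{reg}(S/(t_s,L'))$: the inclusion $L'\subset L$ from~(a) gives a surjection $S/(t_s,L')\twoheadrightarrow S/(t_s,L)$, so $S_i=(t_s,L')_i$ implies $S_i=(t_s,L)_i$; both quotients are Artinian (dimension $0$), so their Hilbert functions are eventually $0$, and by Lemma~\ref{nov6-11-grace}(a) each is $0$ from its first vanishing degree onward. Hence the first vanishing degree of $S/(t_s,L)$ is at most that of $S/(t_s,L')$, which is exactly the statement about regularities since for an Artinian graded ring the index of regularity is that first vanishing degree.

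For part~(c), the key is that $h:=t_s$ is a nonzerodivisor on $S/L$: by Theorem~\ref{jun12-02} each $t_i$ is a nonzerodivisor modulo a lattice ideal, and $L$ is such. (Alternatively, $\mathrm{depth}\,S/L=\dim S/L=1$ by Remark~\ref{remark-lh-rh}(ii), and a general linear form works after the field-extension reduction as in the proof of Proposition~\ref{hilbert-function-dim=1}, but here $t_s$ works directly.) Thus the short exact sequence $0\to(S/L)[-1]\xrightarrow{\,t_s\,}S/L\to S/(t_s,L)\to 0$ gives $H_L(d)-H_L(d-1)=H_{(t_s,L)}(d)$ for all $d\ge 1$. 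By part~(b), $H_{(t_s,L)}(d)=0$ for $d\ge\sum_{i=1}^{s-1}(n_i-1)+1$, so $H_L(d)=H_L(d-1)$ there; and since $\dim S/L=1$, the Hilbert polynomial of $S/L$ is the constant $\deg S/L$, which Proposition~\ref{hilbert-function-dim=1}(ii) identifies as the eventual constant value of $H_L$. Combining, $H_L(d)=H_L(d-1)=\deg S/L$ for $d\ge\sum_{i=1}^{s-1}(n_i-1)+1$, as claimed.

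The routine parts are the Hilbert-series bookkeeping and the short-exact-sequence argument; the one step needing a little care is verifying the regularity formula $\mathrm{reg}(S/(t_s,L'))=\sum_{i=1}^{s-1}(n_i-1)+1$ and, in particular, translating between ``index of regularity'' and ``first degree where the Hilbert function vanishes'' for an Artinian graded quotient — this is where I would be most careful to invoke Lemma~\ref{nov6-11-grace}(a) and the Hilbert-series characterization of the regularity correctly, and to justify the monotonicity passing from $L'$ to $L$ via the surjection.
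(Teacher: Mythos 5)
Your proof is correct and follows essentially the same route as the paper's: part (a) via Lemma~\ref{may5-12} and the torsion of the classes $e_i-e_s$, part (b) via the surjection $S/(t_s,L')\twoheadrightarrow S/(t_s,L)$ and the Hilbert series of the Artinian complete intersection $(t_1^{n_1},\ldots,t_{s-1}^{n_{s-1}})$, and part (c) via the exact sequence $0\to (S/L)[-1]\stackrel{t_s}{\to} S/L\to S/(t_s,L)\to 0$. The extra care you take in identifying the index of regularity of an Artinian quotient with its first vanishing degree (via Lemma~\ref{nov6-11-grace}(a)) is a harmless elaboration of what the paper leaves implicit.
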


\begin{proof} (a): Let $\mathcal{L}\subset\mathbb{Z}^s$ be the lattice that
defines $L$, i.e., $L=I(\mathcal{L})$. By Lemma~\ref{may5-12}, there
are positive integers 
$n_1,\ldots,n_{s-1}$ such that $n_i(e_i-e_s)\in\mathcal{L}$ for all
$i$. Thus, $t_i^{n_i}-t_s^{n_i}\in L$ for all $i$. 

(b): Since $\dim(S/(t_s,L'))=\dim(S/(t_s,L))=0$, the Hilbert
polynomials of $S/(t_s,L')$ and $S/(t_s,L)$ are equal to
zero. Using the epimorphism 
$$
S/(t_s,L')\longrightarrow S/(t_s,L)\longrightarrow 0,
$$
we get that $H_{(t_s,L')}(d)\geq H_{(t_s,L)}(d)$ for $d\geq 0$. Hence,
the index of regularity of $S/(t_s,L)$ is bounded from above by the
index of regularity of $S/(t_s,L')$. The ideal
$I=(t_1^{n_1},\ldots,t_{s-1}^{n_{s-1}})$ is a complete intersection of
the polynomial ring $R=K[t_1,\ldots,t_{s-1}]$, hence the Hilbert
series of $R/I$ is equal to the polynomial  
$$
F_I(t)=(1+t+\cdots+t^{n_1-1})\cdots(1+t+\cdots+t^{n_{s-1}-1}),
$$
see \cite[p.~104]{monalg}. Thus, the index of regularity of $R/I$ is
$\sum_{i=1}^{s-1}(n_i-1)+1$ (see
Section~\ref{hilbert-function-section}). As $S/(t_s,L')\simeq R/I$, 
we get ${\rm reg}(S/(t_s,L'))=\sum_{i=1}^{s-1}(n_i-1)+1$. 

(c): Assume that $d\geq\sum_{i=1}^{s-1}(n_i-1)+1$. There is an exact
sequence of graded rings 
$$
0\longrightarrow (S/L)[-1]\stackrel{t_s}{\longrightarrow}
S/L\longrightarrow
S/(t_s,L)\longrightarrow 0.
$$
Hence, $H_L(d)-H_L(d-1)=
\dim_K(S/(t_s,L))_d=H_{(t_s,L)}(d)$. Therefore, 
using (b), we obtain $H_{(t_s,L)}(d)=0$ and $H_L(d)=H_L(d-1)=
{\rm deg}(S/L)$ (cf. Proposition~\ref{hilbert-function-dim=1}(ii)).  
\end{proof}

We come to the main result of this paper.

\begin{theorem}\label{degree-lattice} If $I(\mathcal{L})\subset S$ is
a graded lattice ideal of 
dimension $1$, then
$$
\deg\, S/I(\mathcal{L})=|T(\mathbb{Z}^s/\mathcal{L})|.
$$
\end{theorem}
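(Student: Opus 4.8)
The plan is to obtain $\deg S/I(\mathcal{L})$ as the eventual value of the Hilbert function, to express that value as the number of monomials of $S$ not in ${\rm in}(I(\mathcal{L}))$ that involve only $t_1,\dots,t_{s-1}$, and to match the latter set bijectively with $T(\mathbb{Z}^s/\mathcal{L})$ through Lemma~\ref{may6-12}.

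Write $L=I(\mathcal{L})$ and let $\mathcal{G}=\{g_1,\dots,g_r\}$ be the reduced Gr\"obner basis of $L$ for the revlex order; by Remark~\ref{jun7-12} no leading term ${\rm in}(g_j)$ is divisible by $t_s$. The first step is the structural observation that a monomial $t^b=t_1^{b_1}\cdots t_s^{b_s}$ lies in ${\rm in}(L)$ if and only if $t_1^{b_1}\cdots t_{s-1}^{b_{s-1}}$ does: the nontrivial implication is that a divisor ${\rm in}(g_j)$ of $t^b$ does not involve $t_s$ and hence already divides $t_1^{b_1}\cdots t_{s-1}^{b_{s-1}}$. (In particular, a monomial not in ${\rm in}(L)$ stays so after multiplication by $t_s$.) Put
$$
B=\{(b_1,\dots,b_{s-1})\in\mathbb{N}^{s-1}\mid t_1^{b_1}\cdots t_{s-1}^{b_{s-1}}\notin{\rm in}(L)\}.
$$
By Proposition~\ref{may6-12-1}(a) we have $t_i^{n_i}-t_s^{n_i}\in L$, and since the last nonzero coordinate of $n_i(e_i-e_s)$ is negative, ${\rm in}(t_i^{n_i}-t_s^{n_i})=t_i^{n_i}\in{\rm in}(L)$; thus the $i$-th entry of an element of $B$ is at most $n_i-1$, so $B$ is finite.

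The second step counts Hilbert values. The monomials of degree $d$ not in ${\rm in}(L)$ form a $K$-basis of $(S/L)_d$, so $H_L(d)$ is their number; by the structural observation these are exactly the monomials $t_1^{b_1}\cdots t_{s-1}^{b_{s-1}}t_s^{\,d-(b_1+\cdots+b_{s-1})}$ with $(b_1,\dots,b_{s-1})\in B$ and $b_1+\cdots+b_{s-1}\leq d$. Since $B$ is finite, for $d\geq\sum_{i=1}^{s-1}(n_i-1)$ the last inequality holds for every element of $B$, so $t^b\mapsto(b_1,\dots,b_{s-1})$ is a bijection from the degree-$d$ monomials not in ${\rm in}(L)$ onto $B$; hence $H_L(d)=|B|$ for all large $d$. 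As $\dim S/L=1$, the Hilbert polynomial is the constant $\deg S/L$, so $\deg S/L=|B|$.

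The third step is the bijection $B\leftrightarrow T(\mathbb{Z}^s/\mathcal{L})$. Define $\phi\colon B\to T(\mathbb{Z}^s/\mathcal{L})$ by sending $b=(b_1,\dots,b_{s-1})$ to the class of $b_1(e_1-e_s)+\cdots+b_{s-1}(e_{s-1}-e_s)$ modulo $\mathcal{L}$, which is torsion by Lemma~\ref{may5-12}(b). For surjectivity, given $\widetilde{\alpha}\in T(\mathbb{Z}^s/\mathcal{L})$ take the vector $a=(a_1,\dots,a_s)$ provided by Lemma~\ref{may6-12}: $a_i\geq0$ for $i\leq s-1$, $t_1^{a_1}\cdots t_{s-1}^{a_{s-1}}\notin{\rm in}(L)$, and $\widetilde{a}=\widetilde{\alpha}$. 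Since $\widetilde{a}$ is torsion, $\ell a\in\mathcal{L}$ for some $\ell\in\mathbb{N}_+$, and homogeneity of $\mathcal{L}$ gives $|a|=0$, i.e.\ $a_s=-(a_1+\cdots+a_{s-1})$; thus $a=\sum_{i=1}^{s-1}a_i(e_i-e_s)$, so $(a_1,\dots,a_{s-1})\in B$ and $\phi(a_1,\dots,a_{s-1})=\widetilde{\alpha}$. For injectivity, if $\phi(b)=\phi(b')$ then the two vectors $\sum b_i(e_i-e_s)$ and $\sum b_i'(e_i-e_s)$ both satisfy conditions (i)--(iii) of Lemma~\ref{may6-12} for this common torsion class, so the uniqueness clause of that lemma forces them to be equal and hence $b=b'$. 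Therefore $|B|=|T(\mathbb{Z}^s/\mathcal{L})|$, and combining with the second step, $\deg S/I(\mathcal{L})=|T(\mathbb{Z}^s/\mathcal{L})|$.

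I expect the only genuinely delicate point to be the passage between degree-$d$ monomials outside ${\rm in}(L)$ and the set $B$: it rests on the revlex order via Remark~\ref{jun7-12} (so that $t_s$ never appears in ${\rm in}(L)$) and on the finiteness of $B$ from Proposition~\ref{may6-12-1}(a), which together ensure that for $d\gg0$ the correspondence is a bijection rather than a mere surjection. The group-theoretic side is essentially supplied by Lemma~\ref{may6-12}, and the remainder is bookkeeping.
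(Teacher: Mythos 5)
Your proof is correct and follows essentially the same route as the paper: both arguments identify $\deg S/I(\mathcal{L})$ with the number of standard monomials (for revlex, using Remark~\ref{jun7-12} and Proposition~\ref{may6-12-1}(a)) and then use Lemma~\ref{may5-12}(b) for well-definedness and Lemma~\ref{may6-12} for injectivity and surjectivity of the map to $T(\mathbb{Z}^s/\mathcal{L})$. The only cosmetic difference is that you factor the paper's map $\mathcal{B}_d\to T(\mathbb{Z}^s/\mathcal{L})$ through the set $B$ of standard monomials in $t_1,\dots,t_{s-1}$, which lets you bypass parts (b) and (c) of Proposition~\ref{may6-12-1}.
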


\begin{proof} 
Let $\succ$ be the revlex order on the monomials of $S$ and let 
${\rm in}(I(\mathcal{L}))$ be the initial ideal of
$I(\mathcal{L})$. We set $d=\sum_{i=1}^{s-1}(n_i-1)+1$. By
Proposition~\ref{may6-12-1}, there are positive integers 
$n_1,\ldots,n_{s-1}$ such that $t_i^{n_i}-t_s^{n_i}\in
I(\mathcal{L})$ for all $i$ and $H_{I(\mathcal{L})}(d)=\deg\,
S/I(\mathcal{L})$. There is an injective map 
$$
\mathcal{B}_d=\{t^c\vert\, t^c\notin{\rm
in}(I(\mathcal{L}))\}\cap S_d\longrightarrow(S/I(\mathcal{L}))_d,\ \ \ \ 
t^c\mapsto t^c+I(\mathcal{L}).
$$
By a classical result in Gr\"obner bases theory \cite[Proposition 1,
p.~228]{CLO}, the image of this map is a basis for the $K$-vector space
$(S/I(\mathcal{L}))_d$. Thus, 
$|\mathcal{B}_d|=H_{I(\mathcal{L})}(d)$. Consider the map 
$$
\phi\colon\mathcal{B}_d\rightarrow T(\mathbb{Z}^s/\mathcal{L}), \ \ \
t^c=t_1^{c_1}\cdots 
t_s^{c_s}\stackrel{\phi}{\longmapsto}(c_1,\ldots,c_{s-1},c_s-d)+\mathcal{L}.
$$

The map $\phi$ is well defined, i.e., $\phi(t^c)$ is in
$T(\mathbb{Z}^s/\mathcal{L})$ for all $t^c$ in $\mathcal{B}_d$. 
This follows directly from Lemma~\ref{may5-12}(b) by noticing the
equality  
$$
(c_1,\ldots,c_{s-1},c_s-d)=c_1(e_1-e_s)+\cdots+c_{s-1}(e_{s-1}-e_s).
$$
 
Altogether, we need only show that $\phi$ is bijective. 
Notice that $t_s^d$ maps to $\widetilde{0}$ under $\phi$. 
By Lemma~\ref{may6-12}, the map $\phi$ is injective. To show that
$\phi$ is onto, take $\widetilde{a}\in T(\mathbb{Z}^s/\mathcal{L})$.
By Lemma~\ref{may6-12}, we may assume 
that $a_i\geq 0$ for $i=1,\ldots,s-1$ and $t_1^{a_1}\cdots
t_{s-1}^{a_{s-1}}\notin{\rm in}(I(\mathcal{L}))$. Notice that $0\leq
a_i\leq n_i-1$ for $i=1,\ldots,s-1$ because $t_i^{n_i}-t_s^{n_i}\in
I(\mathcal{L})$ for all 
$i$. Thus, $\sum_{i=1}^{s-1}a_i\leq\sum_{i=1}^{s-1}(n_i-1)<d$.
Consider the vector $c=(c_1,\ldots,c_s)$ given by $c_i=a_i$ for
$i=1,\ldots,s-1$ and $c_s=d-\sum_{i=1}^{s-1}a_i$. Then, the monomial
$t^c$ is in $\mathcal{B}_d$ and maps to $\widetilde{a}$ under the map
$\phi$.
\end{proof}

\begin{corollary}\label{may28-12}
Let $\mathcal{L}\subset\mathbb{Z}^s$ be a homogeneous lattice of rank $s-1$
generated as a $\mathbb{Z}$-module by the rows of an integral matrix
$A$. Then
$$
{\rm deg}\, S/I(\mathcal{L})=d_1\cdots d_{s-1},
$$
where $d_1,\ldots,d_{s-1}$ are the invariant factors of $A$.
\end{corollary}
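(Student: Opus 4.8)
The plan is to combine the main theorem, which identifies $\deg S/I(\mathcal{L})$ with the order of the torsion subgroup $T(\mathbb{Z}^s/\mathcal{L})$, with the explicit description of that torsion subgroup in terms of the Smith normal form recorded in the introduction. First I would invoke Theorem~\ref{degree-lattice}: since $\mathcal{L}$ is homogeneous of rank $s-1$, Remark~\ref{remark-lh-rh}(i) tells us that $I(\mathcal{L})$ is a graded lattice ideal, and Remark~\ref{remark-lh-rh}(ii) tells us it has dimension $1$, so the theorem applies and gives
$$
\deg\, S/I(\mathcal{L}) = |T(\mathbb{Z}^s/\mathcal{L})|.
$$

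Next I would compute $|T(\mathbb{Z}^s/\mathcal{L})|$ from the matrix $A$. Since $A$ is an integral matrix whose rows generate $\mathcal{L}$, there are unimodular matrices $U,V$ with $UAV = D = \operatorname{diag}(d_1,\ldots,d_r,0,\ldots,0)$ the Smith normal form, where $d_1,\ldots,d_r$ are the invariant factors of $A$. By the fundamental structure theorem for finitely generated abelian groups, as recalled in Section~\ref{intro-degree-lattice},
$$
T(\mathbb{Z}^s/\mathcal{L}) \simeq \mathbb{Z}/(d_1)\oplus\cdots\oplus\mathbb{Z}/(d_r),
$$
so $|T(\mathbb{Z}^s/\mathcal{L})| = d_1\cdots d_r$.

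The only remaining point is to check that $r = s-1$, i.e., that the number of invariant factors equals $s-1$ rather than some smaller number. But $r$ is the rank of $\mathcal{L}$ as a free abelian group, and by hypothesis $\mathcal{L}$ has rank $s-1$; equivalently, the decomposition above has a free part $\mathbb{Z}^{s-r}$ which must match $\mathbb{Z}^s/(\mathbb{Q}\mathcal{L}\cap\mathbb{Z}^s)$, forcing $s-r = s-(s-1) = 1$. Hence $r = s-1$ and $\deg S/I(\mathcal{L}) = d_1\cdots d_{s-1}$, as claimed. I do not anticipate any real obstacle here: the corollary is essentially a translation of Theorem~\ref{degree-lattice} through the Smith normal form machinery already laid out in the introduction, and the proof is a short concatenation of those facts. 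The only thing to be slightly careful about is that the statement takes $A$ to be an arbitrary generating matrix (not necessarily of size $(s-1)\times s$), so one should note explicitly that padding or trimming zero invariant factors does not change the product $d_1\cdots d_{s-1}$ and that exactly $s-1$ of them are nonzero.
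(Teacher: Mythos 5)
Your proposal is correct and follows essentially the same route as the paper: apply Theorem~\ref{degree-lattice} and then identify $|T(\mathbb{Z}^s/\mathcal{L})|$ with the product of the invariant factors via the Smith normal form and the structure theorem for finitely generated abelian groups. Your extra remark that the number of nonzero invariant factors equals $\operatorname{rank}\mathcal{L}=s-1$ is a point the paper handles implicitly by writing the Smith normal form as $\operatorname{diag}(d_1,\ldots,d_{s-1},0,\ldots,0)$ from the outset.
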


\begin{proof} It is well known \cite[Theorem~II.9, pp.~26-27]{New}
that there are unimodular integral matrices $U$ and $V$ such that 
$$
UAV=D={\rm diag}\{d_1,\ldots,d_{s-1},0,\ldots,0\},
$$
$d_i>0$ for $1\leq i\leq s-1$ and $d_i$ divides $d_{i+1}$ for 
all $i$. In matrix theory terminology, this means that 
$D={\rm diag}\{d_1,\ldots,d_{s-1},0,\ldots,0\}$ is the 
{\it Smith normal form\/} of $A$ and $d_1,\ldots,d_{s-1}$ are the 
{\it invariant factors\/} of $A$. Hence, by the fundamental structure
theorem for finitely  
generated abelian groups \cite[pp.~187-188]{JacI}, we get 
$$
\mathbb{Z}^s/\mathcal{L}\simeq
\mathbb{Z}/(d_1)\oplus\cdots\oplus\mathbb{Z}/(d_{s-1})\oplus\mathbb{Z}\ 
\mbox{ and }\ T(\mathbb{Z}^s/\mathcal{L})\simeq
\mathbb{Z}/(d_1)\oplus\cdots\oplus\mathbb{Z}/(d_{s-1}).
$$
Thus, the result follows from Theorem~\ref{degree-lattice}. 
\end{proof}

\begin{corollary} Let $L\subset S$ be a graded lattice ideal of
dimension $1$. If $L$ is generated by the binomials 
$t^{\alpha_1^+}-t^{\alpha_1^-},\ldots,t^{\alpha_{m}^+}-t^{\alpha_{m}^-}$.
Then 
$$
{\rm deg}\, S/L=d_1\cdots d_{s-1},
$$
where $d_1,\ldots,d_{s-1}$ are the invariant factors of the matrix $A$
whose rows are $\alpha_1,\ldots,\alpha_m$.
\end{corollary}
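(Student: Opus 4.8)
The plan is to reduce the statement to Corollary~\ref{may28-12} by identifying the lattice attached to $L$. Since $L$ is a lattice ideal by hypothesis, Lemma~\ref{dec21-11} applies: setting $g_i=t^{\alpha_i^+}-t^{\alpha_i^-}$ for $i=1,\ldots,m$, one has $\widehat{g}_i=\alpha_i^+-\alpha_i^-=\alpha_i$, and hence $L=I(\mathcal{L})$, where $\mathcal{L}=\langle\widehat{g}_1,\ldots,\widehat{g}_m\rangle=\langle\alpha_1,\ldots,\alpha_m\rangle$ is precisely the subgroup of $\mathbb{Z}^s$ generated by the rows of $A$.

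Next I would verify that $\mathcal{L}$ meets the hypotheses of Corollary~\ref{may28-12}, namely that it is a homogeneous lattice of rank $s-1$. Homogeneity is immediate from Remark~\ref{remark-lh-rh}(i), since $L=I(\mathcal{L})$ is graded. For the rank, I would combine the hypothesis $\dim\,S/L=1$ with the fact that ${\rm ht}\,I(\mathcal{L})={\rm rank}\,\mathcal{L}$ (recalled in Remark~\ref{remark-lh-rh}(ii)), which gives ${\rm rank}\,\mathcal{L}=s-\dim\,S/L=s-1$.

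With these two points in place, Corollary~\ref{may28-12} applies directly to $\mathcal{L}$ and yields $\deg\,S/L=\deg\,S/I(\mathcal{L})=d_1\cdots d_{s-1}$, where $d_1,\ldots,d_{s-1}$ are the invariant factors of $A$. I do not expect any genuine obstacle here: once the identification $L=I(\mathcal{L})$ is made, the result is purely formal. The only point worth a sentence is that the matrix of Corollary~\ref{may28-12} and the matrix whose rows are $\alpha_1,\ldots,\alpha_m$ present the same group $\mathbb{Z}^s/\mathcal{L}$, so their nonzero invariant factors coincide; this is also what makes the statement independent of the chosen binomial generating set of $L$.
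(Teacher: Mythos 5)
Your proof is correct and follows essentially the same route as the paper: identify the defining lattice via Lemma~\ref{dec21-11} as $\mathcal{L}=\mathbb{Z}\alpha_1+\cdots+\mathbb{Z}\alpha_m$ and invoke Corollary~\ref{may28-12}. The extra checks you record (homogeneity from the grading, rank $s-1$ from $\dim S/L=1$) are exactly the hypotheses the paper leaves implicit, so nothing more is needed.
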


\begin{proof} Let $\mathcal{L}$ be the homogeneous lattice that defines
the lattice ideal $L$. By Lemma~\ref{dec21-11}, one has the equality 
$\mathcal{L}=\mathbb{Z}\alpha_1+\cdots+\mathbb{Z}\alpha_m$.
Thus, the result follows at once from Corollary~\ref{may28-12}. 
\end{proof}

\begin{lemma}{\rm\cite[pp.~32-33]{stewart}}\label{stewart-tall} If
$H\subset G$ are free abelian groups of the same
rank $r$ with $\mathbb{Z}$-bases
$\delta_1,\ldots, \delta_r$ and $\gamma_1,\ldots, \gamma_r$ related by
$\delta_i=\sum_j g_{ij} \gamma_j$,
where $g_{ij}\in\mathbb{Z}$ for all $i,j$, then $|G/H|=|\det (g_{ij})|$.
\end{lemma}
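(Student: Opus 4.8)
The plan is to reduce everything to a diagonal situation by means of the Smith normal form of the integral matrix $M=(g_{ij})$, exactly in the spirit of the structure theory already used in this paper. First I would observe how the matrix $M$ transforms under changes of $\mathbb{Z}$-basis: if one replaces $\gamma_1,\ldots,\gamma_r$ by another $\mathbb{Z}$-basis of $G$, the new matrix expressing the $\delta_i$ in terms of the new basis is $MV$ for a suitable unimodular $V$; if one replaces $\delta_1,\ldots,\delta_r$ by another $\mathbb{Z}$-basis of $H$, the new matrix is $UM$ for a suitable unimodular $U$. In neither case do the groups $G$ and $H$ themselves change, only the chosen bases and the matrix relating them; and $|\det(\cdot)|$ of an integral matrix is invariant under left or right multiplication by unimodular matrices. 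Hence it suffices to prove the identity after replacing $M$ by $UMV$ for well-chosen unimodular $U$, $V$.

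Next I would invoke the Smith normal form (e.g. \cite[Theorem~II.9]{New}): there are unimodular $U,V$ with $UMV=D=\mathrm{diag}(d_1,\ldots,d_r)$, $d_i\in\mathbb{N}$ and $d_i\mid d_{i+1}$. Performing the corresponding changes of basis, one gets a $\mathbb{Z}$-basis $\gamma_1',\ldots,\gamma_r'$ of $G$ and a $\mathbb{Z}$-basis $\delta_1',\ldots,\delta_r'$ of $H$ with $\delta_i'=d_i\gamma_i'$ for all $i$, so that
$$
H=\mathbb{Z}\,d_1\gamma_1'\oplus\cdots\oplus\mathbb{Z}\,d_r\gamma_r'\ \subset\ \mathbb{Z}\gamma_1'\oplus\cdots\oplus\mathbb{Z}\gamma_r'=G,
$$
and therefore $G/H\cong\mathbb{Z}/(d_1)\oplus\cdots\oplus\mathbb{Z}/(d_r)$. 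Reading off cardinalities gives $|G/H|=|d_1\cdots d_r|=|\det D|=|\det U|\,|\det M|\,|\det V|=|\det M|=|\det(g_{ij})|$, since $U$ and $V$ are unimodular. (If some $d_i=0$, equivalently $\det M=0$, then $G/H$ is infinite; under the standing assumption that $|G/H|$ denotes an honest integer this situation does not arise, so I would simply note it and move on.)

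The step that needs the most care — and the only real obstacle — is the bookkeeping in the first paragraph: one must check that left/right multiplication of $M$ by a unimodular matrix really does correspond to an admissible change of $\mathbb{Z}$-basis of $H$, respectively $G$, and that the relation $\delta_i=\sum_j g_{ij}\gamma_j$ transforms the way one expects. This is routine once the basis changes are written as matrices, using that an integral square matrix is invertible over $\mathbb{Z}$ precisely when it is unimodular. An alternative that avoids citing the Smith normal form is an induction on $r$: apply integer row and column operations (which are exactly the elementary basis changes just described) to bring $M$ to a form with $\gcd$-entry $d_1$ in the corner, splitting off a cyclic summand $\mathbb{Z}/(d_1)$ and a rank $r-1$ subproblem, whose determinant is $\det M/d_1$ up to sign. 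I would, however, present the Smith normal form version, since that packaging is cleaner and is already in use elsewhere in the paper.
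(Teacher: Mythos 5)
Your proof is correct. The paper itself gives no proof of this lemma --- it is quoted directly from Stewart and Tall \cite[pp.~32--33]{stewart} --- so there is nothing internal to compare against, but your Smith-normal-form argument is the standard one and all the steps check out: the basis-change bookkeeping ($\delta'=U\delta$ gives $UM$, $\gamma'=V^{-1}\gamma$ gives $MV$), the reduction to $\delta_i'=d_i\gamma_i'$, and the conclusion $|G/H|=d_1\cdots d_r=|\det M|$. One small simplification: the degenerate case $\det M=0$ you set aside at the end cannot occur under the stated hypotheses, since $H$ is assumed free of rank $r$ with basis $\delta_1,\ldots,\delta_r$, forcing the rows of $M$ to be $\mathbb{Q}$-linearly independent and hence all $d_i>0$.
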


\begin{definition}\rm
Let $\Delta$ be a {\it lattice\/} $n$-{\it simplex\/} in
$\mathbb{R}^s$, i.e., $\Delta$ is the convex hull of a set of
$n+1$ affinely independent points in $\mathbb{Z}^s$. The {\it
normalized volume\/} of $\Delta$ is defined as 
$n!{\rm vol}(\Delta)$. 
\end{definition}

The next result shows that the degree is the normalized volume 
of any $(s-1)$-simplex arising from a $\mathbb{Z}$-basis of
$\mathcal{L}$.

\begin{corollary}\label{jun2-12} If $\mathcal{L}\subset\mathbb{Z}^s$ is a
homogeneous lattice and $\alpha_1,\ldots,\alpha_{s-1}$
is a $\mathbb{Z}$-basis of $\mathcal{L}$, then 
$$
{\rm deg}\, S/I(\mathcal{L})=(s-1)!{\rm vol}({\rm
conv}(0,\alpha_1,\ldots,\alpha_{s-1})),
$$
where ${\rm vol}$ is the relative volume and 
${\rm conv}$ is the convex hull.
\end{corollary}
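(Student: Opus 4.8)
The plan is to chain together the results already established. Since $\alpha_1,\ldots,\alpha_{s-1}$ is a $\mathbb{Z}$-basis of $\mathcal{L}$, the lattice $\mathcal{L}$ has rank $s-1$, so it defines a graded lattice ideal of dimension one and Theorem~\ref{degree-lattice} applies, giving ${\rm deg}\,S/I(\mathcal{L})=|T(\mathbb{Z}^s/\mathcal{L})|$. By Lemma~\ref{may5-12}(b) we have $T(\mathbb{Z}^s/\mathcal{L})=G/\mathcal{L}$, where $G:=\mathbb{Z}(e_1-e_s)\oplus\cdots\oplus\mathbb{Z}(e_{s-1}-e_s)$. Hence it suffices to prove that $|G/\mathcal{L}|=(s-1)!\,{\rm vol}({\rm conv}(0,\alpha_1,\ldots,\alpha_{s-1}))$.

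First I would compute $|G/\mathcal{L}|$ using Lemma~\ref{stewart-tall}. Both $G$ and $\mathcal{L}$ are free abelian of rank $s-1$; take the $\mathbb{Z}$-basis $\gamma_j=e_j-e_s$ ($1\le j\le s-1$) of $G$ and the given $\mathbb{Z}$-basis $\alpha_1,\ldots,\alpha_{s-1}$ of $\mathcal{L}$. Writing $\alpha_i=(a_{i1},\ldots,a_{i,s-1},a_{is})$ and using that $\mathcal{L}$ is homogeneous (so $a_{is}=-\sum_{j<s}a_{ij}$), one gets $\alpha_i=\sum_{j=1}^{s-1}a_{ij}(e_j-e_s)=\sum_{j=1}^{s-1}a_{ij}\gamma_j$. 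Therefore Lemma~\ref{stewart-tall} yields $|G/\mathcal{L}|=|\det(a_{ij})_{1\le i,j\le s-1}|$, the absolute value of the determinant of the matrix obtained from $\alpha_1,\ldots,\alpha_{s-1}$ by deleting the last coordinate.

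Next I would identify this determinant with the normalized relative volume. The simplex $P:={\rm conv}(0,\alpha_1,\ldots,\alpha_{s-1})$ lies in the linear span $W:=\mathbb{R}\mathcal{L}$, an $(s-1)$-dimensional subspace, and by Lemma~\ref{may5-12}(a) the lattice $\mathbb{Z}^s\cap W$ equals $G$, with $\mathbb{Z}$-basis $\gamma_1,\ldots,\gamma_{s-1}$. By definition of relative volume in the sense of \cite{ehrhart,Sta5}, volumes on $W$ are measured in the coordinate system dual to a $\mathbb{Z}$-basis of $\mathbb{Z}^s\cap W$, normalized so that the fundamental parallelepiped of that lattice has volume one. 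In the coordinates afforded by $\gamma_1,\ldots,\gamma_{s-1}$, the vertex $\alpha_i$ has coordinate vector $(a_{i1},\ldots,a_{i,s-1})$, so $P$ corresponds to the Euclidean simplex ${\rm conv}(0,(a_{1j})_j,\ldots,(a_{s-1,j})_j)$ in $\mathbb{R}^{s-1}$, whose ordinary volume is $\frac{1}{(s-1)!}|\det(a_{ij})|$. Hence $(s-1)!\,{\rm vol}(P)=|\det(a_{ij})|=|G/\mathcal{L}|=|T(\mathbb{Z}^s/\mathcal{L})|={\rm deg}\,S/I(\mathcal{L})$, which is the asserted identity.

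The only delicate point is the third paragraph: one must check that the relative volume is normalized with respect to the correct lattice $\mathbb{Z}^s\cap W$, and this is exactly where Lemma~\ref{may5-12}(a) is needed, since it guarantees $\mathbb{Z}^s\cap W=G$ carries the convenient basis $\{e_j-e_s\}$ used above. Any other $\mathbb{Z}$-basis of $\mathbb{Z}^s\cap W$ would give the same relative volume, but the determinant bookkeeping would be less transparent. Everything else is routine linear algebra together with the quoted results.
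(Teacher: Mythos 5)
Your proposal is correct and follows essentially the same route as the paper: reduce via Theorem~\ref{degree-lattice} and Lemma~\ref{may5-12}(b) to computing $|G/\mathcal{L}|$ with $G=\mathbb{Z}(e_1-e_s)\oplus\cdots\oplus\mathbb{Z}(e_{s-1}-e_s)$, apply Lemma~\ref{stewart-tall} to get $|\det(a_{ij})|$, and identify this with $(s-1)!$ times the relative volume. Your third paragraph spells out the normalization of the relative volume (via Lemma~\ref{may5-12}(a)) a bit more explicitly than the paper, which simply cites the definition, but the argument is the same.
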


\begin{proof} By hypothesis,
$\mathcal{L}=\mathbb{Z}\alpha_1\oplus\cdots\oplus\mathbb{Z}\alpha_{s-1}$. 
Hence, using Lemma~\ref{may5-12}(b), we get the equality
$$ T(\mathbb{Z}^s/\mathcal{L})=
\mathbb{Z}(e_1-e_s)\oplus\cdots\oplus\mathbb{Z}(e_{s-1}-e_s)/
\mathbb{Z}\alpha_1\oplus\cdots\oplus\mathbb{Z}\alpha_{s-1}.
$$
For $1\leq i\leq s-1$, we can write 
$\alpha_i=\alpha_{i,1}(e_1-e_s)+\cdots+\alpha_{i,s-1}(e_{s-1}-e_s)$,
where $\alpha_{i,j}$ is the $j^\textup{th}$ entry of $\alpha_i$. Applying
Theorem~\ref{degree-lattice} and Lemma~\ref{stewart-tall} gives
$$
{\rm deg}\,
S/I(\mathcal{L})=|T(\mathbb{Z}^s/\mathcal{L})|=
\left|\det\left(   
\begin{array}{ccc}
\alpha_{1,1}&\cdots&\alpha_{1,s-1}\\
\vdots&\vdots&\vdots \\
\alpha_{s-1,1}&\ldots&\alpha_{s-1,s-1}
\end{array}
\right)\right| =(s-1)!{\rm
vol}(\Delta),
$$
where $\Delta={\rm
conv}(0,(\alpha_{1,1},\ldots,\alpha_{1,s-1}),\ldots,
(\alpha_{s-1,1},\ldots,\alpha_{s-1,s-1}))$
is a simplex in $\mathbb{R}^{s-1}$. To finish the proof we need only
show that ${\rm vol}(\Delta)={\rm vol}({\rm
conv}(0,\alpha_1,\ldots,\alpha_{s-1}))$. This follows from the very
definition of the notion of a 
relative volume (see \cite[Section~2]{ehrhart} and \cite[p.~238]{Sta5}).
\end{proof}

\begin{corollary}\label{jun2-12-bis} Let $I(\mathcal{L})\subset S$ be
a graded lattice ideal of 
dimension $1$. If $I(\mathcal{L})$ is a complete intersection
generated by
$t^{\alpha_1^+}-t^{\alpha_1^-},\ldots,t^{\alpha_{s-1}^+}-t^{\alpha_{s-1}^-}$,
then 
$$
{\rm deg}\, S/I(\mathcal{L})=(s-1)!{\rm vol}({\rm
conv}(0,\alpha_1,\ldots,\alpha_{s-1})).
$$
\end{corollary}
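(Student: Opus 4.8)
The plan is to reduce Corollary~\ref{jun2-12-bis} directly to Corollary~\ref{jun2-12}. The only thing that needs to be checked is that the binomials generating $I(\mathcal{L})$ as a complete intersection actually furnish a $\mathbb{Z}$-basis of the lattice $\mathcal{L}$, after which the formula is immediate. First I would invoke Lemma~\ref{dec21-11}: since $t^{\alpha_1^+}-t^{\alpha_1^-},\ldots,t^{\alpha_{s-1}^+}-t^{\alpha_{s-1}^-}$ generate $I(\mathcal{L})$, the lattice is $\mathcal{L}=\langle\alpha_1,\ldots,\alpha_{s-1}\rangle=\mathbb{Z}\alpha_1+\cdots+\mathbb{Z}\alpha_{s-1}$. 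Thus $\mathcal{L}$ is spanned by $s-1$ vectors.

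Next I would argue that these $s-1$ vectors are in fact a basis, i.e. that they are $\mathbb{Z}$-linearly independent (equivalently $\mathbb{Q}$-linearly independent). Since $I(\mathcal{L})$ has dimension $1$, the rank of $\mathcal{L}$ is $s-1$ by the rank/height correspondence recalled in the introduction (or by Remark~\ref{remark-lh-rh} together with \cite[Proposition~7.5]{cca}). A set of $s-1$ generators of a free abelian group of rank $s-1$ must be linearly independent: if they satisfied a nontrivial $\mathbb{Q}$-linear relation, their $\mathbb{Q}$-span would have dimension $<s-1$, contradicting $\operatorname{rank}(\mathcal{L})=s-1$. Hence $\mathcal{L}=\mathbb{Z}\alpha_1\oplus\cdots\oplus\mathbb{Z}\alpha_{s-1}$, so $\alpha_1,\ldots,\alpha_{s-1}$ is a $\mathbb{Z}$-basis of $\mathcal{L}$. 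Also $\mathcal{L}$ is homogeneous by Remark~\ref{remark-lh-rh}(i) because $I(\mathcal{L})$ is graded, so all the hypotheses of Corollary~\ref{jun2-12} are met.

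Applying Corollary~\ref{jun2-12} to this $\mathbb{Z}$-basis then yields
$$
{\rm deg}\, S/I(\mathcal{L})=(s-1)!\,{\rm vol}({\rm conv}(0,\alpha_1,\ldots,\alpha_{s-1})),
$$
which is exactly the claimed formula. There is essentially no obstacle here: the content is entirely in Theorem~\ref{degree-lattice} and Corollary~\ref{jun2-12}, which are already proved. The only subtlety — and it is a minor one — is making sure one is allowed to pass from "the complete-intersection binomials generate $I(\mathcal{L})$" to "their exponent vectors form a basis of $\mathcal{L}$"; this is where the complete-intersection hypothesis enters, since it guarantees exactly $s-1 = \operatorname{ht}(I(\mathcal{L})) = \operatorname{rank}(\mathcal{L})$ generators, forcing independence. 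One could alternatively observe that the defining property of a complete intersection of binomials of this shape already forces the $\alpha_i$ to be independent, but the rank count is the cleanest route and uses only results quoted earlier.
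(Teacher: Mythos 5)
Your proof is correct and follows essentially the same route as the paper's: Lemma~\ref{dec21-11} gives $\mathcal{L}=\mathbb{Z}\alpha_1+\cdots+\mathbb{Z}\alpha_{s-1}$, and then Corollary~\ref{jun2-12} finishes. The only difference is that you explicitly justify why the sum is direct (the rank count forcing $\mathbb{Q}$-linear independence of the $s-1$ generators), a step the paper leaves implicit when it writes the equality $\mathcal{L}=\mathbb{Z}\alpha_1\oplus\cdots\oplus\mathbb{Z}\alpha_{s-1}$ directly.
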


\begin{proof} By Lemma~\ref{dec21-11}, one has the equality 
$\mathcal{L}=\mathbb{Z}\alpha_1\oplus\cdots\oplus\mathbb{Z}\alpha_{s-1}$.
Thus, the formula for the degree follows from Corollary~\ref{jun2-12}.
\end{proof}

\begin{corollary} If $I(\mathcal{L})\subset S$ is a graded lattice ideal of 
dimension $1$, then $\mathbb{Z}^s/\mathcal{L}$ is torsion-free if 
and only if $I(\mathcal{L})=(t_1-t_s,\ldots,t_{s-1}-t_s)$.
\end{corollary}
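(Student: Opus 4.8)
The plan is to show that the torsion subgroup $T(\mathbb{Z}^s/\mathcal{L})$ is trivial if and only if the given complete intersection equals $I(\mathcal{L})$. Both implications will be extracted from results already established in the excerpt, with the degree formula of Theorem~\ref{degree-lattice} doing the heavy lifting in one direction. First I would prove the easy implication: if $I(\mathcal{L})=(t_1-t_s,\ldots,t_{s-1}-t_s)$, then by Lemma~\ref{dec21-11} the lattice $\mathcal{L}$ is generated by the vectors $\widehat{t_i-t_s}=e_i-e_s$ for $i=1,\ldots,s-1$, so $\mathcal{L}=\mathbb{Z}(e_1-e_s)\oplus\cdots\oplus\mathbb{Z}(e_{s-1}-e_s)$. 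Comparing with Lemma~\ref{may5-12}(b) gives $T(\mathbb{Z}^s/\mathcal{L})=\mathcal{L}/\mathcal{L}=0$, so $\mathbb{Z}^s/\mathcal{L}$ is torsion-free.

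For the converse, assume $\mathbb{Z}^s/\mathcal{L}$ is torsion-free, i.e.\ $T(\mathbb{Z}^s/\mathcal{L})=0$. By Theorem~\ref{degree-lattice} this forces $\deg\,S/I(\mathcal{L})=|T(\mathbb{Z}^s/\mathcal{L})|=1$. Now set $L'=(t_1-t_s,\ldots,t_{s-1}-t_s)$. This is a graded complete intersection of height $s-1$ (its generators are a homogeneous regular sequence, since killing them successively reduces $S$ to $K[t_s]$), so $\dim S/L'=1$; in fact $S/L'\cong K[t_s]$, which has degree $1$. By Proposition~\ref{may6-12-1}(a), or more directly since each $e_i-e_s$ lies in $\mathcal{L}$ whenever... — here I must be careful: I actually need $L'\subseteq I(\mathcal{L})$, which holds precisely because $e_i-e_s\in\mathbb{Q}\mathcal{L}\cap\mathbb{Z}^s=T$-lift, but torsion-freeness gives $\mathbb{Q}\mathcal{L}\cap\mathbb{Z}^s=\mathcal{L}$ by Lemma~\ref{may5-12}(a)--(b), hence $e_i-e_s\in\mathcal{L}$ for all $i$ and thus $t_i-t_s\in I(\mathcal{L})$. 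Therefore $L'\subseteq I(\mathcal{L})$, giving a surjection $S/L'\twoheadrightarrow S/I(\mathcal{L})$ of graded rings, both of Krull dimension $1$.

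It then remains to upgrade the containment $L'\subseteq I(\mathcal{L})$ to equality. The surjection $S/L'\to S/I(\mathcal{L})$ yields $H_{I(\mathcal{L})}(d)\le H_{L'}(d)$ for all $d$; since $S/L'\cong K[t_s]$ we have $H_{L'}(d)=1$ for all $d\ge 0$, while by Proposition~\ref{hilbert-function-dim=1}(ii) (or Lemma~\ref{nov6-11-grace}(b)) $H_{I(\mathcal{L})}(d)\ge 1$ for all $d\ge 0$. Hence $H_{I(\mathcal{L})}(d)=1=H_{L'}(d)$ for every $d$, so the graded components $(L')_d$ and $I(\mathcal{L})_d$ have the same codimension in $S_d$; combined with $(L')_d\subseteq I(\mathcal{L})_d$ this forces $(L')_d=I(\mathcal{L})_d$ for all $d$, i.e.\ $L'=I(\mathcal{L})$. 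The main obstacle is making sure the Hilbert-function comparison is airtight — that torsion-freeness really does pin down $\mathcal{L}=\mathbb{Z}(e_1-e_s)\oplus\cdots\oplus\mathbb{Z}(e_{s-1}-e_s)$ via Lemma~\ref{may5-12}, and that the degree-one computation for $S/L'$ is correctly identified; once those are in place the equality of ideals follows formally from matching Hilbert functions.
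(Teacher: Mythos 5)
Your proof is correct, and its backbone is the same as the paper's: both directions ultimately rest on Lemma~\ref{may5-12}(b), which identifies $T(\mathbb{Z}^s/\mathcal{L})$ with $\bigl(\mathbb{Z}(e_1-e_s)\oplus\cdots\oplus\mathbb{Z}(e_{s-1}-e_s)\bigr)/\mathcal{L}$, so that torsion-freeness is equivalent to $\mathcal{L}=\mathbb{Z}(e_1-e_s)\oplus\cdots\oplus\mathbb{Z}(e_{s-1}-e_s)$. The one place you genuinely diverge is the passage from this lattice equality to the ideal equality: the paper simply asserts ``hence $I(\mathcal{L})=(t_1-t_s,\ldots,t_{s-1}-t_s)$'' (which implicitly uses that $(t_1-t_s,\ldots,t_{s-1}-t_s)$ is itself a lattice ideal, e.g.\ via Theorem~\ref{jun12-02} together with the uniqueness in Lemma~\ref{dec21-11}), whereas you take only the easy containment $L'\subseteq I(\mathcal{L})$ and then force equality by comparing Hilbert functions, using $H_{L'}(d)=1$ from $S/L'\simeq K[t_s]$ and $H_{I(\mathcal{L})}(d)\geq 1$ from Lemma~\ref{nov6-11-grace}(b). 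That detour is sound and has the merit of making explicit a point the paper glosses over; note that your appeal to Theorem~\ref{degree-lattice} is superfluous, since the all-degrees Hilbert-function bound, not the degree, is what closes the argument. For the reverse implication the paper diagonalizes the matrix with rows $e_1-e_s,\ldots,e_{s-1}-e_s$ to the identity via the Smith normal form, while you combine Lemma~\ref{dec21-11} with Lemma~\ref{may5-12}(b); both are immediate and equivalent in substance.
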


\begin{proof} Assume that $\mathbb{Z}^s/\mathcal{L}$ is torsion-free.
Then, by Lemma~\ref{may5-12}(b), one has the equality.
$$
\mathcal{L}=\mathbb{Z}(e_1-e_s)\oplus\cdots\oplus\mathbb{Z}(e_{s-1}-e_s).
$$
Hence, $I(\mathcal{L})=(t_1-t_s,\ldots,t_{s-1}-t_s)$. The converse is
clear because the $(s-1)\times s$ matrix with rows
$e_1-e_s,\ldots,e_{s-1}-e_s$ diagonalizes over the integers to an
identity matrix. 
\end{proof}

\section{Computing some examples}\label{section-examples}

Given a set of generators of a homogeneous lattice
$\mathcal{L}\subset\mathbb{Z}^s$, a standard method to compute the
degree of the lattice ring $S/I(\mathcal{L})$ consists of two steps. 
First, one computes a generating set for $I(\mathcal{L})$ using the
following result:

\begin{proposition}{\cite[Lemma~7.6]{cca}}\label{saturation-gives-lattice} 
If $\mathcal{L}\subset\mathbb{Z}^s$ is a lattice generated by
$\alpha_1,\ldots,\alpha_m$ and $Q$ is the ideal generated by
$t^{\alpha_1^+}-t^{\alpha_1^-},\ldots,t^{\alpha_m^+}-t^{\alpha_m^-}$, 
then 
$$
(Q\colon(t_1\cdots t_s)^{\infty})=I(\mathcal{L}),
$$ 
where $(Q\colon h^{\infty}):=\{f\in S\vert\, fh^p\in Q\mbox{ for some
}p\geq 1\}$ is the saturation of $Q$ and $h=t_1\cdots t_s$. 
\end{proposition}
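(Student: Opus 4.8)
The plan is to work in the Laurent polynomial ring $S_h:=S[h^{-1}]=K[t_1^{\pm1},\ldots,t_s^{\pm1}]$, where $h=t_1\cdots t_s$, and to exploit the standard description $(Q\colon h^\infty)=QS_h\cap S$: an element $f\in S$ lies in the right-hand side precisely when $h^pf\in Q$ for some $p\geq 1$. Granting this, the proof splits into the two inclusions $(Q\colon h^\infty)\subseteq I(\mathcal{L})$ and $I(\mathcal{L})\subseteq(Q\colon h^\infty)$.

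For the first inclusion I would argue as follows. Each generator $t^{\alpha_i^+}-t^{\alpha_i^-}$ of $Q$ belongs to $I(\mathcal{L})$ by the definition of a lattice ideal, so $Q\subseteq I(\mathcal{L})$. By Theorem~\ref{jun12-02}, every $t_i$ is a non-zero-divisor of $S/I(\mathcal{L})$, hence so is $h$; therefore $(I(\mathcal{L})\colon h^\infty)=I(\mathcal{L})$, and consequently $(Q\colon h^\infty)\subseteq(I(\mathcal{L})\colon h^\infty)=I(\mathcal{L})$.

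For the reverse inclusion it is enough to check that every generator $t^{a^+}-t^{a^-}$ of $I(\mathcal{L})$, with $a\in\mathcal{L}$, lies in $QS_h\cap S$. In $S_h$ one has $t^{a^+}-t^{a^-}=t^{a^-}(t^a-1)$, where $t^a=t_1^{a_1}\cdots t_s^{a_s}$ is a Laurent monomial and $t^{a^-}$ is a unit; in the same way $t^{\alpha_i^+}-t^{\alpha_i^-}=t^{\alpha_i^-}(t^{\alpha_i}-1)$, so each $t^{\alpha_i}-1$ lies in $QS_h$. I would then consider the set
$$
G:=\{a\in\mathbb{Z}^s\,\vert\, t^a-1\in QS_h\},
$$
which is a subgroup of $\mathbb{Z}^s$: this follows from $t^0-1=0$, from $t^{a+b}-1=t^a(t^b-1)+(t^a-1)$, and from $t^{-a}-1=-t^{-a}(t^a-1)$. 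Since $\alpha_1,\ldots,\alpha_m\in G$ and $\mathcal{L}=\langle\alpha_1,\ldots,\alpha_m\rangle$, we get $\mathcal{L}\subseteq G$. Hence $t^a-1\in QS_h$ for every $a\in\mathcal{L}$, and multiplying by the unit $t^{a^-}$ shows $t^{a^+}-t^{a^-}\in QS_h\cap S=(Q\colon h^\infty)$, which finishes the inclusion.

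The argument is essentially formal, and I do not expect a serious obstacle; the points that need care are purely organizational. One is the identification $(Q\colon h^\infty)=QS_h\cap S$, a routine property of saturation under localization. The other is the observation that the translation between the honest binomials $t^{a^+}-t^{a^-}\in S$ and the Laurent elements $t^a-1$ is legitimate only after inverting $h$, since the $t^a-1$ need not lie in $S$ themselves; once one commits to computing in $S_h$, the subgroup bookkeeping for $G$ makes everything go through.
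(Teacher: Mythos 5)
Your argument is correct. Note first that the paper does not prove this Proposition at all: it is quoted from \cite[Lemma~7.6]{cca}, so there is no internal proof to compare against. What you have written is essentially the standard proof of that lemma, and it is sound: the identification $(Q\colon h^\infty)=QS_h\cap S$ is routine; the inclusion $Q\subseteq I(\mathcal{L})$ plus the non-zero-divisor property of $h$ modulo $I(\mathcal{L})$ gives $(Q\colon h^\infty)\subseteq(I(\mathcal{L})\colon h^\infty)=I(\mathcal{L})$; and the subgroup $G=\{a\in\mathbb{Z}^s\mid t^a-1\in QS_h\}$, verified to be a subgroup via the identities $t^{a+b}-1=t^a(t^b-1)+(t^a-1)$ and $t^{-a}-1=-t^{-a}(t^a-1)$, contains the $\alpha_i$ and hence all of $\mathcal{L}$, which yields the reverse inclusion after multiplying by the unit $t^{a^-}$.

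The one point you should handle more carefully is the appeal to Theorem~\ref{jun12-02} for the non-zero-divisor property. As printed, the proof of Theorem~\ref{jun12-02} leans on the assertion that the lattice $\mathcal{D}$ of the saturation $(L\colon h^\infty)$ is generated by $\widehat{g}_1,\ldots,\widehat{g}_r$, which is flagged there as ``cf.\ Proposition~\ref{saturation-gives-lattice}'' --- precisely the statement you are proving. This is not a mathematical gap (the fact that each $t_i$ is a non-zero-divisor modulo a lattice ideal has a direct proof: $I(\mathcal{L})$ is spanned over $K$ by the binomials $t^c-t^d$ with $c-d\in\mathcal{L}$, so $S/I(\mathcal{L})$ has a basis indexed by $\sim_\mathcal{L}$-classes of monomials, and multiplication by $t_i$ permutes these classes injectively; this is the content of \cite[Lemma~2.2]{ElVi} used in Lemma~\ref{aug-25-13}), but if your proof is meant to be self-contained within the paper's logical order you should either cite that direct argument or prove the non-zero-divisor property independently rather than routing it through Theorem~\ref{jun12-02}.
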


Second, one uses Hilbert functions (as described in
Section~\ref{hilbert-function-section}) to compute the degree of
$S/I(\mathcal{L})$. The handy command ``degree''  of {\it Macaulay}\/$2$
\cite{mac2} computes the degree. 

This standard method works for any homogeneous lattice. For
homogeneous lattices of rank $s-1$, our method is far more efficient,
especially with large examples. 

\begin{example}\label{jun2-12-1} Let $\mathcal{L}\subset\mathbb{Z}^5$
be the homogeneous lattice of rank 
$4$ generated by the rows
of the matrix 
$$
A=\left(
\begin{matrix}1001&  -500& -501&0&0\cr
0&     3500& -3500&0&0\cr 
0  & 0&3200& -200&-3000\cr
5000& -1000&-1000&-1001&-1999
\end{matrix}
 \right).
$$
The following procedure for {\it Maple\/} \cite{maple} 
\begin{verbatim}
with(LinearAlgebra):
A:=<1001,-500,-501,0,0; 0,3500,-3500,0,0; 
0,0,3200,-200,-3000; 5000,-1000,-1000,-1001,-1999>:
SmithForm(A);
\end{verbatim}
computes the Smith normal form of $A$:
$$
D=\left(
\begin{matrix}1& 0& 0&0&0\cr
0&  1&0&0&0\cr 
0  & 0&100&0&0\cr
0&0&0&91203112000&0
\end{matrix}
 \right).
$$
Thus, by Theorem~\ref{degree-lattice}, we obtain $\deg\,
S/I(\mathcal{L})=(2^8)(5^5)(7^2)(11)(13)(1627)$. 
The standard procedure for computing the degree of $S/I(\mathcal{L})$ fails for this
example. Indeed, {\it Macaulay\/}$2$ does not even compute the
saturation $(Q\colon h^\infty)$ of the ideal 
$$
Q=(t_1^{1001}-t_2^{500}t_3^{501},t_2^{3500}-t_3^{3500},
t_3^{3200}-t_4^{200}t_5^{3000},
t_1^{5000}-t_2^{1000}t_3^{1000}t_4^{1001}t_5^{1999})
$$
with respect to $h=t_1t_2t_3t_4t_5$. Notice that $Q$ is a complete
intersection and accordingly 
$$\deg(S/Q)=(1001)(3500)(3200)(5000)=
(2^{12})(5^9)(7^2)(11)(13).$$
\end{example}

\begin{remark}\label{may31-12} Given an integral matrix $A$, the {\it Macaulay}$2$
function ``smithNormalForm'' produces a diagonal
matrix $D$, and unimodular matrices $U$ and $V$ such that 
$D =UAV$. Warning: even though this function is called the
      Smith normal form, it doesn't necessarily satisfy the more stringent
      condition that the diagonal entries $d_1, d_2,\ldots,d_m$ of
      $D$ satisfy: $d_1\vert d_2\vert\cdots\vert d_m$. For this
      reason we prefer to use {\it Maple\/} \cite{maple} to compute
      the Smith normal form of $A$.
\end{remark}

\begin{example}\label{jun2-12-2} Let $\mathcal{L}\subset\mathbb{Z}^3$
be the homogeneous lattice of rank 
$2$ generated by the rows
of the matrix 
$$
A=\left(
\begin{matrix} 18& -18& 0\cr
45&  0& -45\cr 
0  &10&-10\cr
\end{matrix}
 \right).
$$
The following procedure for {\it Maple\/} \cite{maple} 
\begin{verbatim}
with(LinearAlgebra):
A:=<18,-18,0; 45,0,-45; 0,10,-10>:
SmithForm(A);
\end{verbatim}
computes the Smith normal form of $A$:
$$
D=\left(
\begin{matrix}
1&0&0\cr 
0  &90&0\cr
0&0&0
\end{matrix}
 \right)
$$
Thus, by Theorem~\ref{degree-lattice}, we obtain $\deg\,
S/I(\mathcal{L})=90$. The standard procedure for computing the degree
of $S/I(\mathcal{L})$ works fine in this ``small'' example. Indeed,
using the following procedure for {\it Macaulay\/}$2$ 
\begin{verbatim}
S=QQ[t1,t2,t3]
Q=ideal(t1^18-t2^18,t1^45-t3^45,t2^10-t3^10)       
saturate(Q,t1*t2*t3)
degree saturate(Q,t1*t2*t3)
\end{verbatim}
we obtain 
$$
I(\mathcal{L})=(Q\colon(t_1t_2t_3)^\infty)=(t_1^9-t_2^4t_3^5,t_2^{10}-t_3^{10})
\ \mbox{ and }\ \deg(S/I(\mathcal{L}))=90.
$$
\end{example}

\begin{remark} The program {\it Normaliz} \cite{normaliz2} computes
the {\it normalized volume\/} of lattice polytopes. Hence, 
by Corollary~\ref{jun2-12}, we can use this
program with the handy option  {\tt -v} to compute the degree. This of
course requires computing a $\mathbb{Z}$-basis of the lattice first. We
computed the degree of Example~\ref{jun2-12-2} without any problem 
using ``{\tt normbig.exe}''. 
\end{remark}

Our main result does not extends to graded lattice ideals 
of dimension $\geq 2$.

\begin{example}\label{jun4-12} Consider the homogeneous lattice
$\mathcal{L}=\mathbb{Z}(-1,2,-1)\subset\mathbb{Z}^3$. Then,
$$
I(\mathcal{L})=(t_2^2-t_1t_3)\ \mbox{ and }\ \deg\,
\mathbb{Q}[t_1,t_2,,t_3]/I(\mathcal{L})=2\neq
1=|T(\mathbb{Z}^3/\mathcal{L})|.
$$
\end{example}

\section{Vanishing ideals over finite
fields}\label{vanishing-id-section}

In this section, we link our results
to vanishing ideals over finite fields and 
present an application. 
Vanishing ideals are connected to coding theory as is seen
below. 

\begin{definition}
The {\it projective space\/} of 
dimension $s-1$ over a field $K$, denoted by 
$\mathbb{P}^{s-1}$, is the quotient space 
$$(K^{s}\setminus\{0\})/\sim $$
where two points $\alpha$, $\beta$ in $K^{s}\setminus\{0\}$ 
are equivalent, $\alpha \sim \beta$, if $\alpha=\lambda{\beta}$ for
some $\lambda\in K$. We 
denote the  
equivalence class of $\alpha$ by $[\alpha]$. 
\end{definition}

Let $\mathbb{F}_q$  be a finite field with $q$ elements and 
let $v_1,\ldots,v_s$ be a sequence of vectors in
$\mathbb{N}^n$ with $v_i=(v_{i1},\ldots,v_{in})$ for $1\leq i\leq
s$. 
Consider the {\it projective algebraic toric
set\/} 
$$
X:=\{[(x_1^{v_{11}}\cdots x_n^{v_{1n}},\ldots,x_1^{v_{s1}}\cdots
x_n^{v_{sn}})]\, \vert\, x_i\in \mathbb{F}_q^*\mbox{ for all
}i\}\subset\mathbb{P}^{s-1}
$$
parameterized by the monomials 
$x^{v_1},\ldots,x^{v_s}$, where $\mathbb{F}_q^*=\mathbb{F}_q\setminus\{0\}$ and 
$\mathbb{P}^{s-1}$ is the projective space of dimension $s-1$ over
the field $\mathbb{F}_q$. The set 
$X$ is a multiplicative group under componentwise multiplication. 

Let $S=\mathbb{F}_q[t_1,\ldots,t_s]=S_0\oplus S_1\oplus \cdots\oplus
S_d\oplus\cdots$ be a
polynomial ring over the field $\mathbb{F}_q$ with the standard
grading. 
Recall that the {\it vanishing ideal\/} of $X$, denoted by
$I(X)$, is the ideal 
of $S$ generated by the homogeneous polynomials that vanish at all
points of $X$. 

According to the next theorem, our results can be applied to this
family of vanishing ideals.

\begin{theorem}\label{lemma-homogeneous-lattice} If $\mathbb{F}_q$ is a finite
field, then 
\begin{itemize}
\item[(a)] \cite{geramita-cayley-bacharach} $I(X)$ is a radical $1$-dimensional
Cohen-Macaulay ideal.

\item[(b)] \cite{algcodes} There is a unique homogeneous
lattice $\mathcal{L}$ such that $I(X)=I(\mathcal{L})$. 

\item[(c)] \cite[Lecture 13]{harris} $H_{I(X)}(d)=|X|$ for $d\geq |X|-1$.
\end{itemize}
\end{theorem}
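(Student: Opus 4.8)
\emph{Proof plan.} I would treat the three statements separately, noting that (a) and (b) are also available from the cited references \cite{geramita-cayley-bacharach} and \cite{algcodes}, and that part (b) is the one carrying real content. For (a): since $X$ is a finite set of points, $I(X)=\bigcap_{[P]\in X}I([P])$, where $I([P])$ is the prime ideal of the point $[P]$, generated by the linear binomials $P_jt_i-P_it_j$ and of height $s-1$; hence $I(X)$ is radical and $S/I(X)$ is reduced of Krull dimension $1$, with the $I([P])$ as its minimal (hence associated) primes. By the monomial parameterization every point of $X$ has all coordinates in $\mathbb{F}_q^{*}$, so $t_1$ lies in no $I([P])$ and is therefore a homogeneous degree-one nonzerodivisor on $S/I(X)$; thus $\operatorname{depth}(S/I(X))\ge 1=\dim(S/I(X))$ and $S/I(X)$ is Cohen--Macaulay.

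For (b) the same observation shows each $t_i$ is a nonzerodivisor on $S/I(X)$. The plan is to produce the lattice directly: set $\mathcal{L}=\{a\in\mathbb{Z}^s : t^{a^{+}}-t^{a^{-}}\in I(X)\}$. Using that the $t_i$ are nonzerodivisors one checks that $\mathcal{L}$ is closed under addition (clear denominators by a suitable monomial and cancel it), hence a subgroup of $\mathbb{Z}^s$; using that $I(X)$ is graded and contains no monomial of $S$ (each is a unit on $X$) one gets $|a|=0$ for all $a\in\mathcal{L}$, so $\mathcal{L}$ is homogeneous; and since $t_i^{q-1}-t_s^{q-1}\in I(X)$ by Fermat's little theorem, $(q-1)(e_i-e_s)\in\mathcal{L}$ for $1\le i\le s-1$, so $\operatorname{rank}\mathcal{L}=s-1$. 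By construction $I(\mathcal{L})\subseteq I(X)$, so there is a graded surjection $S/I(\mathcal{L})\twoheadrightarrow S/I(X)$.

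The remaining (and, I expect, main) point is the reverse inclusion, which I would get from a degree count rather than by manipulating generators. On one side $S/I(\mathcal{L})$ is a $1$-dimensional Cohen--Macaulay ring (Remark~\ref{remark-lh-rh}), and by Theorem~\ref{degree-lattice} together with Lemma~\ref{may5-12} one has $\deg S/I(\mathcal{L})=|T(\mathbb{Z}^s/\mathcal{L})|=[\{a\in\mathbb{Z}^s:|a|=0\}:\mathcal{L}]$; the homomorphism $a\mapsto\chi_a$, $\chi_a([P])=\prod_iP_i^{a_i}$ (well defined since $|a|=0$), carries $\{a:|a|=0\}$ into the character group of $X$ (of order $|X|$) with kernel exactly $\mathcal{L}$, so $\deg S/I(\mathcal{L})\le|X|$. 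On the other side the surjection above forces $\deg S/I(\mathcal{L})\ge\deg S/I(X)=|X|$ (both rings are $1$-dimensional Cohen--Macaulay, so their Hilbert functions are eventually equal to their degrees). Hence the two degrees coincide; the kernel $I(X)/I(\mathcal{L})$ then has eventually vanishing Hilbert function by Proposition~\ref{hilbert-function-dim=1}(ii), so it is a submodule of finite length of $S/I(\mathcal{L})$, which, being Cohen--Macaulay of positive dimension, has no nonzero such submodule; therefore $I(X)=I(\mathcal{L})$, and uniqueness of the lattice is Lemma~\ref{dec21-11}. (Alternatively one may quote from \cite{algcodes} that $I(X)$ is a binomial ideal and apply Theorem~\ref{jun12-02}.) The delicate step is precisely this promotion of an equality of Hilbert polynomials to an equality of ideals, which is where Cohen--Macaulayness is used essentially.

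Finally, for (c): by (a) the ring $S/I(X)$ is $1$-dimensional and Cohen--Macaulay, so Proposition~\ref{hilbert-function-dim=1}(ii) provides $r\ge 0$ with $1=H_{I(X)}(0)<H_{I(X)}(1)<\cdots<H_{I(X)}(r)$ and $H_{I(X)}(d)=\deg S/I(X)=|X|$ for all $d\ge r$. Since these are strictly increasing integers starting at $1$, $|X|=H_{I(X)}(r)\ge r+1$, i.e. $r\le|X|-1$; hence $H_{I(X)}(d)=|X|$ for every $d\ge|X|-1$.
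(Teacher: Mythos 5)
Your proof is correct, but note that the paper itself offers no proof of this theorem: each of (a), (b), (c) is simply quoted from the literature (\cite{geramita-cayley-bacharach}, \cite{algcodes}, \cite[Lecture 13]{harris}), so what you have written is a genuinely different, essentially self-contained derivation built from the paper's own machinery. Part (a) is the standard argument via the primary decomposition $I(X)=\bigcap_{[P]\in X}I([P])$ and the observation that $t_1$ avoids every associated prime. For (b), the cited reference \cite{algcodes} obtains the lattice by exhibiting $I(X)$ as the saturation $(Q\colon(t_1\cdots t_s)^\infty)$ of an explicit binomial ideal; your route instead defines $\mathcal{L}$ as the set of exponent vectors of binomials lying in $I(X)$ and closes the gap $I(\mathcal{L})\subseteq I(X)$ by a degree count, combining Theorem~\ref{degree-lattice} and Lemma~\ref{may5-12} with the character-group bound $[\{a:|a|=0\}:\mathcal{L}]\le|\operatorname{Hom}(X,\mathbb{F}_q^*)|=|X|$ and the fact that a one-dimensional Cohen--Macaulay ring has no nonzero finite-length submodules. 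There is no circularity in invoking Theorem~\ref{degree-lattice}, whose proof nowhere uses vanishing ideals; indeed your argument shows that the main theorem can be turned around to \emph{prove} that $I(X)$ is a lattice ideal, which is a nice by-product. Two small points you should make explicit: (i) the equality $|\operatorname{Hom}(X,\mathbb{F}_q^*)|=|X|$ holds because the exponent of the multiplicative group $X$ divides $q-1$ and $\mathbb{F}_q^*$ is cyclic of that order; (ii) your treatment of (c) imports the classical fact $\deg S/I(X)=|X|$ (quoted in the introduction from \cite{harris}) and derives only the effective bound $d\ge|X|-1$ from Proposition~\ref{hilbert-function-dim=1}(ii), which is exactly the division of labor the paper intends but is worth stating.
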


Hence, by (c), the degree of $S/I(X)$ is equal to $|X|$. Thus, our
results can be used to 
compute $|X|$, especially in cases where the homogeneous lattice that 
defines the ideal $I(X)$ is known (see for instance
\cite[Theorem~2.5]{algcodes} for such cases). 

The degree of $S/I(X)$ is relevant from the
viewpoint of algebraic coding theory as we now briefly explain. 
Roughly speaking, an {\it
evaluation code\/} over $X$ of degree $d$ is a linear space obtained
by evaluating all homogeneous $d$-forms of $S$ on
the set of points $X\subset{\mathbb P}^{s-1}$ (see
\cite{duursma-renteria-tapia,gold-little-schenck}). 
An evaluation code over
$X$ of degree $d$ has {\it length\/} $|X|$ and {\it dimension\/}
$H_{I(X)}(d)$. The main parameters (length, dimension, 
minimum distance) of evaluation codes of this type have been studied in
\cite{duursma-renteria-tapia,gold-little-schenck,hansen,cartesian-codes,ci-codes}.
The index of regularity is useful in coding theory
 because potentially good evaluation codes can occur only 
if $1\leq d < {\rm reg}(S/I(X))$. 

The complete intersection property of $I(X)$ was recently
characterized in \cite{ci-lattice} in algebraic and geometric terms
(see also \cite{d-compl}). 
If $X$ is parameterized by the edges of a clutter, then $I(X)$ is a
complete intersection if and only if $X$ is a projective torus 
\cite{ci-codes}.

Let $\mathcal{L}$ be the homogeneous lattice that defines $I(X)$. The
next result shows how the algebraic structure of $\mathbb{Z}^s/\mathcal{L}$ is
reflected in the algebraic structure of $I(X)$.

\begin{corollary}\label{jun2-12-3} If $q-1$ is a prime number such that 
$v_i\not\equiv v_j\! \mod(q-1)$ for $i\neq j$ and 
$T(\mathbb{Z}^s/\mathcal{L})\simeq(\mathbb{Z}_{q-1})^{s-1}$, then $I(X)$ is a complete
intersection if and only if  
$$I(X)=(t_1^{q-1}-t_s^{q-1},\ldots,t_{s-1}^{q-1}-t_s^{q-1}).$$
\end{corollary}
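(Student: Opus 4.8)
\emph{Proof sketch.} Set $\Gamma=\mathbb{Z}(e_1-e_s)\oplus\cdots\oplus\mathbb{Z}(e_{s-1}-e_s)$ and $J=(t_1^{q-1}-t_s^{q-1},\ldots,t_{s-1}^{q-1}-t_s^{q-1})$. The ``if'' direction is immediate: $J$ is generated by $s-1$ elements and has height $s-1$ (the listed binomials form a homogeneous regular sequence, so $\dim\,S/J=1$), hence $J$ is a complete intersection, and so if $I(X)=J$ then $I(X)$ is a complete intersection. For the ``only if'' direction the plan is to show that the torsion hypothesis already forces $\mathcal{L}=(q-1)\Gamma$, and then that $\mathcal{L}=(q-1)\Gamma$ forces $I(X)=I(\mathcal{L})=J$; in fact the complete intersection hypothesis on $I(X)$ will play no role.

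The first implication I would argue purely group-theoretically. By Theorem~\ref{lemma-homogeneous-lattice} we have $I(X)=I(\mathcal{L})$ for a homogeneous lattice $\mathcal{L}$, and since $S/I(X)$ has dimension $1$, $\mathcal{L}$ has rank $s-1$; hence by Lemma~\ref{may5-12}(b), $\mathcal{L}\subseteq\Gamma$ and $T(\mathbb{Z}^s/\mathcal{L})=\Gamma/\mathcal{L}$. If $\Gamma/\mathcal{L}\simeq(\mathbb{Z}_{q-1})^{s-1}$, then every class of $\Gamma/\mathcal{L}$ is annihilated by $q-1$, so $(q-1)\Gamma\subseteq\mathcal{L}$; and since $\Gamma$ is free of rank $s-1$, $[\Gamma:(q-1)\Gamma]=(q-1)^{s-1}=|(\mathbb{Z}_{q-1})^{s-1}|=[\Gamma:\mathcal{L}]$. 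With $(q-1)\Gamma\subseteq\mathcal{L}\subseteq\Gamma$ and equal finite indices one gets $\mathcal{L}=(q-1)\Gamma=\mathbb{Z}(q-1)(e_1-e_s)\oplus\cdots\oplus\mathbb{Z}(q-1)(e_{s-1}-e_s)$.

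For the second implication, $J\subseteq I(\mathcal{L})$ is clear since $(q-1)(e_i-e_s)\in\mathcal{L}$ for all $i$. To get equality I would compare degrees via Hilbert series: $S/J$ is a complete intersection of dimension $1$ with Hilbert series $(1-t^{q-1})^{s-1}/(1-t)^s$, so $\deg\,S/J=(q-1)^{s-1}$ (Section~\ref{hilbert-function-section}), while $\deg\,S/I(\mathcal{L})=|T(\mathbb{Z}^s/\mathcal{L})|=(q-1)^{s-1}$ by Theorem~\ref{degree-lattice}. The graded surjection $S/J\twoheadrightarrow S/I(\mathcal{L})$ has kernel $I(\mathcal{L})/J$, whence $H_J(d)=H_{I(\mathcal{L})}(d)+\dim_K(I(\mathcal{L})/J)_d$ for all $d$; for $d\gg 0$ the two Hilbert functions agree with their common degree $(q-1)^{s-1}$, so $I(\mathcal{L})/J$ has finite length. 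But $S/J$ is Cohen--Macaulay of dimension $1$, hence of positive depth, hence has no nonzero submodule of finite length; therefore $I(\mathcal{L})/J=0$ and $I(X)=I(\mathcal{L})=J$. (Alternatively one can check via Theorem~\ref{jun12-02} that each $t_i$ is a nonzero divisor on $S/J$, so $J$ is a lattice ideal, and then conclude with Lemma~\ref{dec21-11} that $J=I((q-1)\Gamma)=I(\mathcal{L})$.)

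I expect the main obstacle to be precisely the equality $J=I(\mathcal{L})$: a generating set of a lattice does not in general yield a generating set of its lattice ideal (one must pass to a saturation, cf.\ Proposition~\ref{saturation-gives-lattice}), so the reverse inclusion $I(\mathcal{L})\subseteq J$ is the one piece of genuine content, and it is the degree comparison via Theorem~\ref{degree-lattice} that makes it go through. I would also remark that neither the primality of $q-1$ nor the condition $v_i\not\equiv v_j\bmod(q-1)$ enters the argument above; these are the natural hypotheses under which the torsion group acquires the stated shape (cf.\ \cite{algcodes}), and for $q\geq 3$ the isomorphism $T(\mathbb{Z}^s/\mathcal{L})\simeq(\mathbb{Z}_{q-1})^{s-1}$ in fact already implies $v_i\not\equiv v_j\bmod(q-1)$ for $i\neq j$.
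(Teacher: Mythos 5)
Your proof is correct, but it takes a genuinely different route from the paper's. The paper works from the complete intersection hypothesis: it writes $I(X)=(f_1,\ldots,f_{s-1})$ with the $f_i$ homogeneous binomials of degrees $\delta_1,\ldots,\delta_{s-1}$, uses $v_i\not\equiv v_j\bmod (q-1)$ to exclude linear binomials (so $\delta_i\geq 2$), applies Theorem~\ref{degree-lattice} to get $\delta_1\cdots\delta_{s-1}=\deg S/I(X)=(q-1)^{s-1}$, invokes the primality of $q-1$ to force $\delta_i=q-1$ for every $i$, and finally identifies the two $(s-1)$-dimensional spaces $I(X)_{q-1}$ and $\sum_i\mathbb{F}_q(t_i^{q-1}-t_s^{q-1})$ to conclude. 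You instead work entirely on the lattice side: the isomorphism type of the torsion group (not merely its order) pins down $\mathcal{L}$ exactly, via the correct index computation $(q-1)\Gamma\subseteq\mathcal{L}\subseteq\Gamma$ with $[\Gamma:(q-1)\Gamma]=[\Gamma:\mathcal{L}]=(q-1)^{s-1}$, and your degree-plus-depth argument (or, alternatively, the nonzerodivisor criterion of Theorem~\ref{jun12-02} combined with Lemma~\ref{dec21-11}) legitimately upgrades the inclusion $J\subseteq I(\mathcal{L})$ to equality, which is indeed the only point with real content. What your approach buys is the sharper observation that the torsion hypothesis alone already forces $I(X)=J$, so that under the corollary's hypotheses both sides of the equivalence hold automatically and neither the complete intersection assumption, nor the primality of $q-1$, nor the non-congruence condition is actually needed for the ``only if'' direction. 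What the paper's approach buys is that it exercises the intended mechanism of Theorem~\ref{degree-lattice} (degree equals the product of the degrees of the binomial generators of a complete intersection) and uses only the order of the torsion group; it is the pattern of argument one would want when the group structure is not fully prescribed. Both proofs are valid; yours additionally reveals that the stated hypotheses are stronger than the conclusion requires, which is worth pointing out.
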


\begin{proof} Assume that $I(X)$ is a complete intersection, i.e., the
ideal $I(X)$ is generated by homogeneous binomials
$f_1,\ldots,f_{s-1}$ of degrees $\delta_1,\ldots,\delta_{s-1}$. 
The linear binomial $t_i-t_j$ is not in $I(X)$ for any
$i\neq j$, this follows using that $v_i\not\equiv v_j\! \mod(q-1)$.
Thus, $\deg(f_i)=\delta_i\geq 2$ for all $i$. By Theorem~\ref{degree-lattice},
we have 
$$
{\rm deg}\, S/I(X)=(q-1)^{s-1}=\delta_1\cdots \delta_{s-1}.
$$
As $q-1$ is prime, we get that $\delta_i=q-1$ for all $i$. Consider
the $\mathbb{F}_q$-vector spaces
$$V=\mathbb{F}_q(t_1^{q-1}-t_s^{q-1})+\cdots+\mathbb{F}_q(t_{s-1}^{q-1}-t_s^{q-1})
\ \mbox{ and }\ I(X)_{q-1}=\mathbb{F}_qf_1+\cdots+\mathbb{F}_qf_{s-1}.
$$
It suffices to show the equality $V=I(X)_{q-1}$. 
Since $t_i^{q-1}-t_s^{q-1}$ vanishes at all points of $X$ for all $i$, we get 
that $t_i^{q-1}-t_s^{q-1}\in I(X)_{q-1}$ for all $i$. 
Consequently $V=I(X)_{q-1}$ because $V$ and $I(X)_{q-1}$ have the same
dimension. The converse is clear because
$t_1^{q-1}-t_s^{q-1},\ldots,t_{s-1}^{q-1}-t_s^{q-1}$ form a regular
sequence and the height of $I(X)$ is $s-1$.
\end{proof}

\section{Problems and related results}\label{future-works-section}

Let $G$ be a connected simple graph. Two prime examples of graded
lattice ideals 
of dimension one 
are the vanishing ideal $I(X)$ of a set $X$ parameterized by the edges of
$G$ \cite{algcodes} and the toppling ideal $I_G$ of $G$
\cite{perkinson}. 

If $\mathcal{L}$ is the lattice defining $I(X)$ or $I_G$, 
it would be interesting to understand how 
the group structure, as a finite
abelian group, of $T(\mathbb{Z}^s/\mathcal{L})$ is reflected in
the algebraic structure of $I(X)$ and $I_G$ (cf.
Corollary~\ref{jun2-12-3}), and viceversa. It would also be
interesting to compute the index of  
regularity using linear algebra methods. The index of regularity of
$I(X)$ is
used in algebraic coding theory to find potentially good parameterized
codes \cite{cartesian-codes,evencycles}.  

A  difficult problem is to determine the group structure
 of $T(\mathbb{Z}^s/\mathcal{L})$ in terms of the 
combinatorics of $G$. In the setting of algebraic graph theory
\cite{alfaro-valencia,lorenzini}, 
 the group $T(\mathbb{Z}^s/\mathcal{L})$ associated to $I_G$ is called
the sandpile group of $G$. Its group structure 
is only known for a few families of graphs (see \cite{alfaro-valencia,lorenzini} 
and the references therein). By the Kirchhoff's matrix tree theorem, 
the order of $T(\mathbb{Z}^s/\mathcal{L})$  is the number of spanning
trees of $G$ (see \cite[Theorem~6.3, p.~39]{Biggs} and
\cite[Theorem~1.1]{laplacian-survey}). Thus, by our main 
result, one obtains a nice combinatorial formula for the degree of $S/I_G$.
On the other hand in the setting of commutative algebra and coding
theory \cite{evencycles,algcodes}, the 
degree of $S/I(X)$ has been computed in terms of the combinatorics of
$G$ \cite[Theorem~3.2]{evencycles}. 
Thus, our main result gives a combinatorial formula for the order of
$T(\mathbb{Z}^s/\mathcal{L})$.    

\bigskip

\noindent
{\bf Acknowledgments.} 
The authors would like to thank the referees for their
careful reading of the paper and for the improvements that
they suggested.

\bibliographystyle{plain}

\end{document}